\newtheorem{thm}{Theorem}
\newtheorem{problem}[thm]{Problem}
\newtheorem{prop}[thm]{Proposition}
\newtheorem{lem}[thm]{Lemma}
\newtheorem{cor}[thm]{Corollary}
\newtheorem{conj}[thm]{Conjecture}
\theoremstyle{definition}
\newtheorem{ex}[thm]{Example}
\newtheorem{exd}[thm]{Example and Definition}
\newtheorem{exer}[thm]{Exercise}
\newtheorem{defi}[thm]{Definition}
\newtheorem{quest}[thm]{Question}
\theoremstyle{remark}
\newtheorem{rmk}[thm]{Remark}
\numberwithin{thm}{section}
\begin{document}
\title{Lectures on Nonnegative Polynomials and Sums of Squares}
\author{Grigoriy Blekherman}
\author{Jannik Wesner}
\maketitle

\section*{Introduction}

These lecture notes provide an informal introduction to the theory of nonnegative polynomials and sums of squares. We highlight the history and some recent developments, especially the new connections with classical (complex) algebraic geometry and commutative algebra. We briefly discuss the connection to semidefinite programming and applications. We expect the reader to be familiar with algebra and fundamentals of algebraic geometry, on the level of a graduate course, and know basic facts about positive semidefinite matrices. For additional background we refer to \cite{MR1322960} for commutative algebra, \cite{MR1940576} for convex geometry and \cite{BPT} for polynomial optimization.

\section{Lecture 1}\label{sec1}

We begin with a motivating question. Suppose that we are given a partially specified matrix $A$ as in Example \ref{ex:psmat}. Can we complete $A$ by finding the unspecified ``?" entries, so that the full matrix is positive semidefinite?


\begin{ex}\label{ex:psmat}
	\begin{enumerate}
	\item The matrix
		\[ \begin{pmatrix}
			1 & 1 & 1 & ?\\
			1 & 1 & 1 & ?\\
			1 & 1 & 1 & 1\\
			? & ? &  1 & 1
		\end{pmatrix}\]
		can be completed to a full positive semidefinite matrix by setting all ``?" entries to $1$.
	\item The matrix
		\[ \begin{pmatrix}
			1 & 1 & 1 & ?\\
			1 & 1 & 1 & ?\\
			\cline{3-4}
			1 & 1 & \multicolumn{1}{|c}{1} & 2\\
			? & ? &  \multicolumn{1}{|c}{2} & 1
		\end{pmatrix}\]
		cannot be completed to a full positive semidefinite matrix since the $2\times2$ minor in the lower right corner is negative.
	\end{enumerate}
\end{ex}

The second example suggests a necessary condition:
\begin{center}($*$) All fully specified principal minors of $A$ must be nonnegative.\end{center}

Now we consider a more general situtation: 
\begin{exd} \label{ex_graph}
	For a partially specified matrix 
	\[ A = \begin{pmatrix}
		* & * & * & ?\\
		* & * & * & ?\\
		* & * & * & *\\
		? & ? & * & *
	\end{pmatrix}\]
	with known entries ``$*$" and unspecified entries ``$?$" we construct a graph $G$.
	The vertices will be $1,2,3$ and $4$, corresponding to the rows (or columns) of $A$. Vertex $i$ is connected to $j$ if and only if $a_{ij} = *$ is a specified entry. 
	For the matrix $A$ above we arrive at the following graph:\\
	\begin{center}
	\begin{tikzpicture}
		\tikzset{enclosed/.style={draw, circle, inner sep=0pt, minimum size=.15cm, fill=black}}
		
		\node[enclosed, label={left, yshift=.2cm: 2}] (2) at (0.75,3.00) {};
		\node[enclosed, label={left, yshift=-.2cm: 1}] (1) at (0.75,1.00) {};
		\node[enclosed, label={right, yshift=.2cm: 3}] (3) at (2.48,2.00) {};
		\node[enclosed, label={right, yshift=.2cm: 4}] (4) at (4.48,2.00) {};
		
		\draw (1) -- (2);
		\draw (2) -- (3);
		\draw (3) -- (1);
		\draw (3) -- (4);
	\end{tikzpicture}
	\end{center}

We only consider partially specified matrices where all diagonal entries are specified. Such patterns of specified and unspecified entries in a symmetric matrix are in bijective correspondence to simple graphs.
\end{exd}

We arrive at a general question about patterns of specified and unspecified entries, or equivalently the corresponding graphs:

\begin{quest}\label{quest:gr} For which graphs $G$ is the obvious necessary condition ($*$) also sufficient for positive semidefinite completion? Equivalently, for what patterns of specified and unspecified entries can any assignment of *-entries in $A$ such that all fully specified principal minors are nonnegative, be completed to a full positive semidefinite matrix? 
\end{quest}

\begin{rmk}
Problems where we are given a partially specified matrix and we want to find the unspecified entries so that the full matrix has certain properties are called \textit{matrix completion problems}. The problem we are concerned with is the \textit{positive semidefinite matrix completion problem}. We may also be interested in finding a positive semidefinite completion of minimal possible rank. Such problems are called \textit{low-rank matrix completion problems}. They are related to the area of \textit{sparse recovery and compressed sensing} \cite{laurent2009matrix, MR2680543}. 
\end{rmk}

\subsection{Through the looking glass} We can reformulate our Question \ref{quest:gr} in a different way. Let $I=I(G)$ be the monomial ideal generated by the products $x_i x_j$ where $(i,j)$ is a non-edge
in the graph $G$.

\begin{exer}
\begin{enumerate}

\item Show that the ideal $I(G)$ is radical.

 \item Show that the variety $X=X(G)$ defined by vanishing of $I$ is the union of linear coordinate subspaces, each corresponding
to a maximal clique of $G$, or a maximal fully specified submatrix of $A$ respectively (For a generalization of this phenomenon see Stanley-Reisner ideals \cite{MR1453579}).
\end{enumerate}
\end{exer}
 
In Example \ref{ex_graph} $I=\left< x_1 x_4, x_2 x_4 \right>$. The variety defined by $I$ is $X = L_1 \cup L_2$ where
$L_1 = \mathrm{span}\left\{e_1,e_2,e_3\right\}$ and $L_2 = \mathrm{span}\left\{e_3,e_4\right\}$.
Partially specified matrices are simply quadratic forms modulo $I$. Since $I$ is a radical ideal, we can also think of them as functions on the variety $X$.

The obvious necessary condition ($*$) now says that we are considering quadratic forms that are nonnegative on $X$.

\begin{exer}
Show that a quadratic form nonnegative on $\mathbb{R}^n$ is a sum of squares of linear forms. (Hint: The matrix of a nonnegative quadratic form is positive semidefinite. Now use diagonalization of quadratic forms).
\end{exer}

We can reformulate Question \ref{quest:gr} as a problem about the relationship between nonnegative quadratic forms and sums of squares on $X$.

\begin{quest}\label{quest:ref}
Let $I$ be a square-free quadratic monomial ideal and $X$ be the variety defined by $I$. For what ideals $I$ can any quadratic form nonnegative on $X$ be written as a sum of squares modulo $I$ (we will call such forms sums of squares on $X$)? 
\end{quest}

To study this question and its generalizations we will introduce a little bit more notation. A variety $X\subseteq \mathbb{C}\mathbb{P}^n$ is called \textit{real} if it is defined by real equations, or equivalently if it is invariant under conjugation. However, we would like to exclude varieties such as the one given by $x_0^2+\dots+x_n^2=0$, which contains no real points. A complex variety is \textit{totally real} if it is conjugation invariant and its real points $X(\mathbb{R})=X\cap \mathbb{R}\mathbb{P}^n$ are Zariski dense in $X$. Equivalently $X$ is totally real if it contains a smooth real point, or if the ideal of $X$ is real-radical \cite[Chapters 3 and 4]{MR1659509}.
  
For a real projective variety $X$ we will use $R(X)$ (or sometimes just $R$ when the variety is clear form context) to denote its real \textit{homogeneous coordinate ring} $\mathbb{R}[x_0,\dots,x_n]/I(X)$. Degree $d$ part of the coordinate ring will be denoted by $R(X)_d$ or $R_d$.

\begin{defi}
	Let $I$ be a square-free quadratic monomial ideal in $\mathbb{R}\left[x_1, \dots , x_n\right]$ and let $G$ be the graph on $n$ vertices
	where monomials of $I$ are the non-edges of $G$. Let $X$ be the real projective variety defined by $I$.
	We define 
	\begin{align*}
		P_X &:= P(G) :=
			\left\{ f \in R(X)_2 \; \middle| \; \forall [v] \in X(\mathbb{R}): \, f\left(v\right) \geq 0\right\}, \\
		\Sigma_X &:= \Sigma(G) :=
			\left\{ f \in R(X)_2 \; \middle| \; \exists k \in \mathbb{N} \, \exists \ell_1, \dots \ell_k \in R(X)_1 :
				f = \sum_{i=1}^k \ell_i^2  \right\}.
	\end{align*}
\end{defi}
It is clear that we have the inclusion $\Sigma(G) \subseteq P(G)$. The opposite inclusion is what Question \ref{quest:ref} asks about. We can restate it with our new notation:

\begin{quest}\label{quest:ref2} For which graphs $G$ does $P(G) = \Sigma (G)$ hold?
\end{quest}

Before giving the answer to the above questions, we will give some history of the topic of nonnegative polynomials and sums of squares, which is a classical question in real algebraic geometry.

\subsection{Eine kleine Geschichte}

We will first discuss the question of global nonnegativity. We will call a polynomial $p$ nonnegative if it takes only nonnegative values, i.e. $p(x)\geq 0$ for all $x \in \mathbb{R}^n$. We will call $p$ a sum of squares if it can be written as a sum of squares of polynomials. It is clear that all sums of squares are nonnegative. To study the relationship between nonnegative polynomials and sums of squares it suffices to consider homogeneous polynomials.

\begin{exer}
Define homogenization $\tilde{p}$ of a polynomial $p(x_1,\dots,x_n)$ of degree $d$ by introducing a new variable $x_0$ and multiplying all monomials in $p$ by a power of $x_0$, so that all monomials have degree $d$. More formally:
$$\tilde{p}=x_0^d\cdot p\left(\frac{x_1}{x_0},\dots,\frac{x_n}{x_0}\right).$$
Show that $p$ is nonnegative if and only if $\tilde{p}$ is, and the same is also true for sums of squares.
\end{exer}

\begin{defi}
	Let ${P}_{n,2d}$ denote the set of nonnegative homogeneous polynomials in $n$ variables of degree $2d$.
	Likewise $\Sigma_{n,2d}$ denotes the set of homogeneous polynomials in $n$ variables of degree $2d$, which are sums of squares.
\end{defi}

The question of the relationship between nonnegative polynomials and sums of squares arose during the Ph.D. defense of Hermann Minkowski. David Hilbert was one of the examiners and Minkowski claimed that there exist nonnegative polynomials that are not sums of squares, although he did not prove this claim. Three years later Hilbert published a paper \cite{MR1510517} completely describing all the cases, in terms of degree and number of variables, where nonnegative polynomials are equal to sums of squares.


\begin{thm}\cite{MR1510517}
	The equality ${P}_{n,2d} = \Sigma_{n,2d}$ holds only in the following three cases:
	\begin{enumerate}[label=(\roman*)]
		\item $n=2$ (univariate non-homogeneous polynomials)
		\item $2d = 2$ (quadratic forms)
		\item $n=3$, $2d = 4$ (ternary quartics)
	\end{enumerate}
\end{thm}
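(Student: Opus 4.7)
The plan is to split Hilbert's theorem into two complementary halves: verifying $P_{n,2d}=\Sigma_{n,2d}$ in the three asserted cases, and producing a strict inclusion $\Sigma_{n,2d}\subsetneq P_{n,2d}$ in every other case. The negative half I would organize by reducing to two base cases, so that most of the ``existence of bad examples'' work is concentrated in a handful of explicit polynomials.

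For the positive cases I would dispose of (ii) immediately, since it is precisely the earlier exercise that a nonnegative quadratic form diagonalizes as a sum of squares of linear forms. For (i), a nonnegative univariate polynomial factors over $\mathbb{R}$ as a product of squares of linear factors and of positive definite quadratics $x^2+bx+c$ with $b^2<4c$; each such quadratic is a sum of two squares, and by the Brahmagupta--Fibonacci identity $(a^2+b^2)(c^2+d^2)=(ac-bd)^2+(ad+bc)^2$ the class of sums of two squares is closed under multiplication. Hence every nonnegative univariate polynomial is a sum of two squares, and homogenization recovers the result for binary forms. Case (iii), the ternary quartics, is the genuinely deep case; Hilbert's own argument goes through the geometry of smooth plane quartics and their theta characteristics, while a more modern alternative is to combine a dimension and boundary analysis of $\Sigma_{3,4}$ with closure and convexity to force equality with $P_{3,4}$.

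For the negative half, my first move is two reduction lemmas that propagate counterexamples. If $p\in P_{n,2d}\setminus\Sigma_{n,2d}$, then viewing $p$ in $n+1$ variables gives an element of $P_{n+1,2d}\setminus\Sigma_{n+1,2d}$: any putative SOS decomposition in the larger ring specializes under $x_{n+1}=0$ to one in the smaller ring. Similarly, $x_1^2\cdot p$ lies in $P_{n,2d+2}\setminus\Sigma_{n,2d+2}$, because from $x_1^2 p=\sum q_i^2$ setting $x_1=0$ forces $x_1\mid q_i$ for each $i$, and dividing out yields an SOS decomposition of $p$. Iterating these two operations reduces the whole problem to the two base cases $(n,2d)=(3,6)$ and $(4,4)$.

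In each base case the strategy is to write down an explicit form, prove nonnegativity by AM--GM, and rule out SOS by a Newton polytope argument. For $(3,6)$ I would take the Motzkin polynomial $M(x,y,z)=x^4y^2+x^2y^4+z^6-3x^2y^2z^2$: nonnegativity is three term AM--GM, $\tfrac{1}{3}(x^4y^2+x^2y^4+z^6)\geq x^2y^2z^2$, and the Newton polytope forces any $q_i$ in a hypothetical decomposition $M=\sum q_i^2$ to be a linear combination of $x^2y$, $xy^2$, $z^3$, and $xyz$; then the monomial $x^2y^2z^2$ can only arise from the $xyz$ terms, so matching coefficients yields $\sum d_i^2=-3$, a contradiction. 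For $(4,4)$ I would use an analogous quaternary quartic, e.g.\ Choi's form $w^2x^2+x^2y^2+y^2z^2+z^2w^2-4wxyz$, with the same two-step verification. The main obstacle in the whole proof is case (iii): the reductions and the explicit counterexamples are mechanical once set up, whereas establishing $P_{3,4}=\Sigma_{3,4}$ is a serious classical theorem genuinely needing new input.
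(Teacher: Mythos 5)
The paper does not give its own proof of Hilbert's theorem; it states the result, cites \cite{MR1510517}, and scatters the ingredients of the negative half across exercises: the two reduction lemmas are exactly Exercise \ref{exer:neq}, the Newton polytope obstruction is Exercise \ref{exer:np}, and the Motzkin form handles the $(3,6)$ base case. Your overall architecture (reduce the strict-inclusion cases to $(3,6)$ and $(4,4)$, dispose of each by AM--GM plus a Newton polytope argument, and treat (ii) by diagonalization, (i) by factoring over $\mathbb{R}$ with the Brahmagupta--Fibonacci identity) matches this standard route, and your treatment of (i), (ii), the reductions, and the $(3,6)$ case is correct.

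There is, however, a concrete error in your $(4,4)$ base case. The form you propose, $w^2x^2 + x^2y^2 + y^2z^2 + z^2w^2 - 4wxyz$, is in fact a sum of squares:
\[
w^2x^2 + x^2y^2 + y^2z^2 + z^2w^2 - 4wxyz \;=\; (wx - yz)^2 + (xy - zw)^2,
\]
so it cannot separate $P_{4,4}$ from $\Sigma_{4,4}$, and the whole argument collapses at this step. The form you have in mind is presumably the Choi--Lam quartic
\[
Q(x,y,z,w) \;=\; w^4 + x^2y^2 + y^2z^2 + z^2x^2 - 4xyzw,
\]
which is nonnegative by AM--GM and is \emph{not} SOS: its Newton polytope is the tetrahedron with vertices $(0,0,0,4)$, $(2,2,0,0)$, $(0,2,2,0)$, $(2,0,2,0)$, and the only monomials in half that polytope are $w^2$, $xy$, $yz$, $zx$, $xyzw^{1/2}$ -- wait, since $(1,1,1,1)$ is not in $\tfrac12 N_Q$ here one has to check carefully; the half-polytope contains only $w^2, xy, yz, zx$ and no monomial whose square gives $xyzw$, so matching the $-4xyzw$ coefficient gives a contradiction. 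Replace your form with this one (or another genuine example such as $x^2y^2 + y^2z^2 + z^2x^2 + w^4 - 4xyzw$) and re-run the Newton polytope computation explicitly. Finally, be aware that your case (iii) is a placeholder rather than a proof: you correctly identify it as the hard part, but neither Hilbert's theta-characteristic argument nor a ``dimension and boundary analysis'' is actually supplied, so the positive half of the theorem remains open in your write-up just as it is left as a citation in the paper.
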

\begin{rmk}
	The two smallest cases of interest for strict inclusion of $\Sigma_{n,2d}$ into $P_{n,2d}$ are:
	\begin{enumerate}[label=(\roman*)]
		\item $n=3$, $2d = 6$ and
		\item $n=4$, $2d=4$.
	\end{enumerate}
	One example of a nonnegative polynomial that is not a sum of squares for each of these cases is enough (see Exercise \ref{exer:neq} below). Hilbert's proof however was not constructive and for a long time no explicit examples were known.
\end{rmk}
\begin{exer}\label{exer:neq}
Show the following:
\begin{enumerate}
\item If $\Sigma_{k,2d}\subsetneq P_{k,2d}$ then $\Sigma_{n,2d}\subsetneq P_{n,2d}$ for all $n\geq k$.

\item If $\Sigma_{n,2m}\subsetneq P_{n,2m}$ then $\Sigma_{n,2d}\subsetneq P_{n,2d}$ for all $d\geq m$.
\end{enumerate}  
Conclude that $\Sigma_{3,6} \subsetneq P_{3,6}$ and $\Sigma_{4,4}\subsetneq P_{4,4}$ suffice to settle all strict inclusion cases of Hilbert's Theorem. These arguments were part of Hilbert's original proof. (Hint: For the second part take $f\in P_{n,2m}$, $f\notin \Sigma_{n,2m}$ and consider $f\cdot x_1^2$. Argue that $f\in  P_{n,2m+2}$ and $f \notin \Sigma_{n,2m+2}$.)
\end{exer}

The first explicit example of a nonnegative polynomial that is not a sum of squares was found by Motzkin in 1967 \cite{MR0223521}. In fact Motzkin was not aware that this was an open problem. Olga Taussky-Todd, who was present during the seminar in which Motzkin described his construction, later notified him that he found the first example of a nonnegative polynomial that is not a sum of squares \cite{MR1747589}. 
\begin{ex}[Motzkin's Form]
	$M(x,y,z)=x^2y^4 + x^4 y^2 + z^6 - 3 x^2 y^2 z^2 \in P_{3,6}$ by the arithmetic mean/geometric mean inequality \cite{MR944909}. In Exercise \ref{exer:np} below you will learn a way to show that $M(x,y,z)$ is not a sum of squares.

\end{ex}

In the following Exercises we will develop a general method to show that a form is not a sum of squares, based on the monomials that occur in the form. This method can also be used to find an explicit example for the case $(4,4)$.  These ideas are originally due to Choi, Lam, and Reznick \cite{MR1327293}. 

\begin{exer}\label{exer:np}
The \textit{Newton Polytope}\index{Newton polytope} $N_p$ of a polynomial $p$ is the convex hull of the vectors of monomial exponents that occur in $p$. For example, the Newton Polytope of $x^2y+xy+1$ is the convex hull of vectors $(2,1), (1,1)$ and $(0,0)$. Show that if a polynomial $p=\sum_i q^2_i$ is a sum of squares, then the Newton Polytope of each $q_i$ is contained in $\frac{1}{2}N_p$. (Hint: Consider the convex hull of the Newton polytopes of $q_i$'s.)
\end{exer}

\begin{exer}Use Exercise \ref{exer:np} to show that the Motzkin form is not a sum of squares.

\end{exer}

\begin{exer}\cite{MR498384}
   Show that the form $S(x,y,z)=x^4y^2+y^4z^2+z^4x^2-3x^2y^2z^2$ is nonnegative but not a sum of squares.
\end{exer}

\subsubsection{Hilbert's 17th Problem:} After showing that nonnegative polynomials are not necessarily sums of squares of polynomials, Hilbert showed in 1893  that in $3$ variables every nonnegative form is a sum of squares of rational functions \cite{MR1554835}. Hilbert's 17th problem asked whether the same holds in any number of variables. There is a more convenient way of describing what it means for a form $p$ to be a sum of squares of rational functions. Let $p = \sum \left(\frac{q_i}{r_i}\right)^2$ for some $q_i,r_i \in \mathbb{R}\left[x\right]$. Clearing the denominators one arrives at
$p r^2 = \sum {q'}_i^2$. In other words a form is a sum of squares of rational functions if and only if there exists a form $r$ such that $p\cdot r^2$ is a sum of squares of forms. We can even allow a sum of squares multiplier, instead of a single square. If we assume that $p \cdot \left(\sum h_i^2\right) = \sum q_i^2$ then multiplying both sides by $\sum h_i^2$ we  also arrive at the form
\[ p \cdot \left(\sum h_i^2\right)^2 = \underbrace{\left(\sum q_i^2\right)\cdot\left(\sum h_i^2\right)}_{\text{sum of squares}} .\]
Therefore we can ask instead: Is there a multiplier $\sum h_i^2$ such that $p \cdot \left(\sum h_i^2\right)$ is a sum of squares.\\

Hilbert's 17th problem was solved in 1920's by Artin using the Artin-Schreier theory of real closed fields \cite{MR1659509}. He showed that any nonnegative form is a sum of squares of rational functions. However, some quantitative aspects of Hilbert's 17th problem are not well-understood. If we know the degree of the sums of squares multiplier $h$, then the sum of squares decomposition can still be computed via semidefinite programming \cite[Chapter 3]{BPT}. Even now, both lower and upper degree bounds for sums of squares multipliers are not well-understood. The best current upper bounds were found in \cite{lombardi2014elementary}. See \cite{MR3545483}  for some lower bounds. For the case $n=3$ the best bounds come from Hilbert's original proof. See \cite{MR3998790} for a modern proof of this result, as well as, tight bounds for the degree of sums of squares multipliers on curves. 

\begin{problem}
Develop a better understanding of degree bounds on sums of squares of rational functional in Hilbert's 17th problem and, more generally, for nonnegative polynomials on a totally real variety $X$.
\end{problem}

\begin{rmk}
We can also consider fixing a multiplier form, for instance, $(x_1^2+\dots+x_n^2)^k$ and given a nonnegative form $p$ ask for the smallest value of $k$ such that $p\cdot (x_1^2+\dots+x_n^2)^k$ becomes a sum of squares. Such a multiplier will work for any positive form, but unfortunately there exist forms for which no positive sum of squares multiplier will work. For more on this line of research see \cite{MR1747589, MR2159759, MR1347159, MR2898748}.
\end{rmk}


\subsection{Two is enough.} To understand all cases of equality between degree $2d$ nonnegative forms on a real projective variety $X$ it suffices to understand all cases of equality between quadratic forms on varieties, i.e. it suffices to understand all varieties $X$ for which $P_X=\Sigma_X$, where $P_X$ is the set of all quadratic forms nonnegative on $X$, and $\Sigma_X$ is the set of sums of squares of linear forms. To understand why this is true we introduce the Veronese map.

The $d$-th Veronese map $\nu_d$ is a map from $\mathbb{P}^n$ to $\mathbb{P}^{\binom{n+d}{d} - 1}$ which sends a point with projective coordinates $\left[x_0:\dots : x_n\right] $ to all monomials of degree $d$ in variables $x_0,\dots,x_n$:
\begin{align*}
	\nu_d \, : \, \mathbb{P}^n &\hookrightarrow \mathbb{P}^{\binom{n+d}{d} - 1} \\
	\left[x_0:\dots : x_n\right] &\mapsto \left[\text{all monomials of degree $d$ in these variables}\right]
\end{align*}
Here is an explicit example of the second Veronese map from $\mathbb{P}^2$ to $\mathbb{P}^5$:
$$\nu_2 \, : \, \mathbb{P}^2 \hookrightarrow \mathbb{P}^5, \; \left[x_0:x_1:x_2\right] \mapsto \left[x_0^2:x_0x_1:x_0x_2:x_1^2:x_1x_2:x_2^2\right].$$
The map $\nu_d$ is very useful because it ``linearizes" degree $d$ forms on $X$: degree $d$ forms on $X$ are simply linear forms on $\nu_d(X)$. 
Therefore the following identification can be made:
Degree $d$ forms on $X$ are linear forms on $\nu_d\left(X\right)$; degree $2d$ forms on $X$ are quadratic forms on $\nu_d\left(X\right)$. If we use $P_{X,2d}$ to denote the set of all degree $2d$ forms nonnegative on $X$, and $\Sigma_{X,2d}$ for the set of sums of squares of degree $d$ forms, then we see that: $$P_{X,2d} = \Sigma_{X,2d} \hspace{1cm} \text{if and only if}   \hspace{1cm} P_{\nu_d\left(X\right), 2} = \Sigma_{\nu_d\left(X\right), 2}.$$

\subsection{Monomials strike back.}

We now come back to the case of quadratic monomial ideals and the associated graphs. We first define an important class of graphs:
\begin{defi}
	A graph $G$ is called \textit{chordal}, if any cycle of length at least $4$ in $G$ has a chord dividing it. In other words $G$ has no chordless induced
	cycles of length at least $4$.
\end{defi}

Chordal graphs, and the related concept of \textit{treewidth} have many interesting properties and characterizations, they are an algorithmically important class of graphs \cite{vandenberghe2015chordal}. Here is a different characterization of chordal graphs:

\begin{exer}\cite{MR130190}
A \textit{clique sum} of two graphs $G$ and $H$ is a gluing of $G$ and $H$ along a complete subgraph contained in both $G$ and $H$. Show that a graph $G$ is chordal if and only if $G$ is the clique-sum of complete graphs.
\end{exer}

We are ready to answer Question \ref{quest:gr} from the beginning of the lecture, and the equivalent Questions \ref{quest:ref} and \ref{quest:ref2}.
\begin{thm}\cite{MR739282}\label{thm:graphs}
	Let $G$ be a graph and $X$ the associated variety. Then $\Sigma(G)=P(G)$ if and only if $G$ is chordal.
	In this case any $p \in  P(G)$ can be written as sum of at most $\mathrm{dim}\,X + 1$ squares (mod $I$).
	This number is equal to the size of the maximal specified minor in the respective matrix, which is equal to $\omega\left(G\right)$, the clique number of $G$.
\end{thm}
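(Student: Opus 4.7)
My plan separates into necessity, sufficiency, and the rank count, proceeded in that order.

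For necessity, the idea is to reduce to induced cycles. If $H \subseteq G$ is an induced subgraph, the ring surjection $R(G) \twoheadrightarrow R(H)$ obtained by setting $x_j = 0$ for $j \notin V(H)$ sends $P(G)$ onto $P(H)$ and $\Sigma(G)$ onto $\Sigma(H)$ (surjectivity via zero-padding), so $P(G) = \Sigma(G)$ forces $P(H) = \Sigma(H)$ by descent. Since every non-chordal graph contains an induced cycle $C_k$ with $k \geq 4$, it suffices to exhibit an element of $P(C_k) \setminus \Sigma(C_k)$. For this I would take
$$f_k = \sum_{i=1}^{k} x_i^2 + 2\sum_{i=1}^{k-1} x_i x_{i+1} - 2 x_k x_1,$$
which restricts to $(x_i + x_{i+1})^2$ on edges $(i,i+1)$ for $i<k$ and to $(x_k - x_1)^2$ on edge $(k,1)$, so $f_k \in P(C_k)$. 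If $f_k \equiv \sum_m \ell_m^2 \pmod{I(C_k)}$ with $\ell_m = \sum_j c_{mj} x_j$, set $v_j = (c_{mj})_m \in \mathbb{R}^r$; matching coefficients yields $\|v_j\|^2 = 1$ for all $j$, $\langle v_j, v_{j+1}\rangle = 1$ for $j<k$, and $\langle v_k, v_1 \rangle = -1$. The first batch is Cauchy--Schwarz equality for unit vectors, forcing $v_1 = \cdots = v_k$, which contradicts the last relation.

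For sufficiency, I would translate the statement into PSD matrix completion. A form $f \in R(G)_2$ is the same data as a partial symmetric matrix $A$ specified on the diagonal and on edges of $G$: non-edge coefficients lie in $I(G)$ and do not affect the class of $f$. The hypothesis $f \in P(G)$ is equivalent to every specified (clique-indexed) principal submatrix of $A$ being PSD, and $f \in \Sigma(G)$ is equivalent to $A$ admitting a PSD completion $A^\star$: given such a completion, a Gram factorization $A^\star = L L^T$ writes $f$ as a sum of squares of linear forms modulo $I$, and conversely any SOS representation assembles into a PSD completion. Sufficiency therefore reduces to the PSD matrix-completion theorem of Grone--Johnson--S\'a--Wolkowicz (the cited reference [MR739282]): on a chordal pattern every partial matrix with PSD specified submatrices admits a PSD completion. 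I would prove this by induction on $|V(G)|$ along a perfect elimination order: at a simplicial vertex $v$, use the PSD block $A_{N(v)\cup\{v\}}$ and Schur-complement reasoning to pin down the missing entries of $v$'s row, then apply the inductive hypothesis to the chordal graph $G-v$.

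For the rank count, a rank-$r$ PSD completion yields an SOS representation with $r$ summands via Gram factorization, so it suffices to produce a completion of rank at most $\omega(G)$. I would invoke---or derive by tracking the inductive construction above---the sharper statement that on a chordal pattern the minimum rank over all PSD completions equals the maximum rank among specified principal submatrices, which is bounded by the maximum clique size $\omega(G)$. The equality $\omega(G) = \dim X + 1$ is immediate from the description of $X$ as the union of coordinate subspaces $V(K)$ of dimension $|K|$ indexed by maximal cliques $K$. The main obstacle is the combined sufficiency/rank statement: the existence of a PSD completion is not obvious (arbitrary fill-ins typically destroy global positive semidefiniteness even when all specified submatrices are PSD), and controlling the rank requires the elimination order to be chosen so that each inductive step---corresponding to processing one maximal clique along a junction tree of $G$---introduces only a rank-compatible update, never pushing the running Gram factorization above $\omega(G)$ summands.
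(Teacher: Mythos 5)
The paper states this theorem as a citation to Grone--Johnson--S\'a--Wolkowicz (the reference \cite{MR739282}) and supplies no proof of its own, so there is no internal argument to compare against; I will instead assess your proposal directly.

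Your argument is essentially the standard one and is correct. The necessity half is complete: the restriction map $R(G)\twoheadrightarrow R(H)$ to an induced subgraph really does send $P(G)$ onto $P(H)$ (a point of $X(G)$ lives on a clique $K$ of $G$, and $K\cap V(H)$ is a clique of $H$ because $H$ is induced, so the zero-padded lift stays nonnegative) and sends $\Sigma(G)$ onto $\Sigma(H)$ (restricting a generator $x_ix_j$ of $I(G)$ lands in $I(H)$), and the form $f_k$ on $C_k$ with the Cauchy--Schwarz rigidity argument is a clean explicit witness; it is the SOS-language version of the classical ``frustrated cycle'' obstruction from the completion literature. For sufficiency and the rank bound, the translation to PSD completion via Gram factorization is correct, and the clique-tree induction is the right mechanism. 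One remark: the claim you propose to ``invoke or derive'' --- that on a chordal pattern the minimum PSD completion rank equals the maximum rank of a fully specified principal submatrix --- is true (it is a theorem of Dancis), but it is stronger than you need; to reach $\omega(G)$ it suffices to realize each maximal clique's PSD block by Gram vectors in $\mathbb{R}^{\omega(G)}$ and glue along the separators of a clique tree, choosing an orthogonal transformation at each step so the shared vectors agree. A small cosmetic point: $X$ is projective, so the coordinate subspace $L_K$ has projective dimension $|K|-1$, giving $\dim X=\omega(G)-1$, which is exactly what makes $\dim X+1=\omega(G)$ come out right.
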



In combinatorial commutative algebra there is a theorem due to Fr\"oberg, which gives a different characterization of chordal graphs. Before we state the theorem we need to discuss syzygies and free resolutions. 

\subsubsection{Aside on syzygies and free resolutions} Let us begin by considering two quadratic forms $p$ and $q$ in $3$ variables, or equivalently two quadric curves in $\mathbb{P}^2$. We would like to understand all linear relations between $p$ and $q$ with coefficients in $\mathbb{R}[x_1,x_2,x_3]$. More plainly, we would like to understand for what polynomials $r_1$ and $r_2$ in $\mathbb{R}[x_1,x_2,x_3]$ we have $pr_1+qr_2=0$. Such a linear relation is called a \textit{syzygy} of $p$ and $q$. If either $p$ or $q$ is irreducible then it is easy to see that there is only one minimal syzygy that generates all others: $p\cdot q+q\cdot(-p)=0$. Since $p$ and $q$ are quadratics, this is a quadratic syzygy between $p$ and $q$. Special choices of $p$ and $q$ can lead to linear syzygies. For instance if $p=xy$ and $q=yz$ then there is a linear syzygy $p\cdot z+q\cdot (-x)=0$. Notice that this linear syzygy corresponds to the special geometry of the intersection of the curves defined by $p$ and $q$, namely they intersect in a positive dimensional set. For more on the fascinating connection between syzygies and geometry we refer the reader to \cite{MR2103875}.

A \textit{minimal free resolution} of the ideal $I$ (or essentially equivalently of the quotient $S/I$) considers not only syzygies between the generators of the ideal, but also syzygies between syzygies, syzygies between syzygies of syzygies and so on. We illustrate the idea of minimal free resolution by an example: let $G$ be a cycle on $5$ vertices with edges $(1,3)$, $(1,4)$, $(2,4)$, $(2,5)$, and $(3,5)$ and let $I=I(G)=\langle x_1x_2,x_2x_3,x_3x_4,x_4x_5,x_1x_5\rangle $. The first syzygies of $I$ are generated by syzygies of the type $x_1x_2\cdot x_3+x_2x_3\cdot(-x_1)=0$ between the five pairs of adjacent generators. By numbering the generators we can arrange the syzygies as $5$ dimensional vectors. For instance the above syzygy would we written as $(x_3,-x_1,0,0,0)$ and the next syzygy as $(0,x_4,-x_2,0,0)$. Together these five syzygies generate the \textit{first syzygy module}. Now we need to consider syzygies between syzygies. In this case there is a unique such syzygy: $$x_4x_5\begin{pmatrix} x_3\\-x_1\\0\\0\\0\end{pmatrix}+x_1x_5\begin{pmatrix} 0\\x_4\\-x_2\\0\\0\end{pmatrix}+x_1x_2\begin{pmatrix} 0\\0\\x_5\\-x_3\\0\end{pmatrix}+x_2x_3\begin{pmatrix} 0\\0\\0\\x_1\\-x_4\end{pmatrix}+x_3x_4\begin{pmatrix} -x_5\\0\\0\\0\\x_2\end{pmatrix}=0.$$
The resolution now stops since this unique syzygy cannot lead to any further syzygies. Let $S$ denote the polynomial ring $\mathbb{R}[x_1,\dots,x_5]$. The syzygy exploration above can be arranged into an exact sequence as follows:
\[ 0\rightarrow S(-5)\xrightarrow{\begin{pmatrix}x_4x_5\\x_1x_5\\x_1x_2\\x_2x_3\\x_3x_4\end{pmatrix}} S(-3)^5\xrightarrow{\begin{pmatrix} x_3&0&0&0&-x_5\\-x_1&x_4&0&0&0\\0&-x_2&x_5&0&0\\0&0&-x_3&x_1&0\\ 0&0&0&-x_4&x_2\end{pmatrix}} S(-2)^5\xrightarrow{\begin{pmatrix}x_1x_2\\x_2x_3\\x_3x_4\\x_4x_5\\x_1x_5\end{pmatrix}^T} S\rightarrow S/I\rightarrow 0.\]
The numbers in parenthesis such as $S(-5)$ denote a shift in the grading, so that all maps send elements of degree $0$ to elements of degree $0$. They can be safely disregarded for our purposes.

In recent years some connections between the study of nonnegative polynomials and sums of squares on a real variety $X$ and the properties of its free resolution have been discovered. This is a fascinating and developing direction of research. 

For these lectures we will only consider ideals generated by quadrics. The following properties of linear syzygies will be of interest to us:

\begin{defi}
Let $I$ be an ideal of $S=\mathbb{R}[x_1,\dots,x_n]$ generated by quadrics. The\textit{ Green-Lazarsfeld index} of $S/I$ is the largest number of steps plus one in the minimal free resolution of $S/I$ such that all of the syzygies in these steps are linear. It is equal to $1$ if $I$ is generated by quadrics but quadrics have nonlinear minimal syzygies, it is $2$ if the generating quadrics have only linear minimal syzygies, but these syzygies have non-linear syzygies, etc.

The quotient ring $S/I$ has \textit{Castelnuovo-Mumford regularity} $2$ if all of the syzygies in the minimal free resolution of $S/I$ are linear.

The \textit{length of the linear strand} in the minimal free resolution of $S/I$ is the number of steps that linear syzygies of quadrics persist plus one: it is $1$ if the quadrics in $I$ have no linear syzygies, it is $2$ if the quadrics have linear syzygies, but these syzygies have no linear syzygies, etc.
\end{defi}
 
 In the above example the Green-Lazarsfeld index is $1$ since all the entries in the $5\times 5$ matrix are linear, but the following matrix has quadratic entries. Therefore the Castelnuovo-Mumford regularity is greater than $2$. The length of the linear strand is two, since quadrics of $I$ have linear syzygies, but the resulting linear syzygies have no linear syzygies among them. 
\subsection{Back to the future.}
Now we can state Fr\"oberg's theorem.
\begin{thm}\cite{MR1171260}
	A quadratic, squarefree monomial ideal $I$ has Castelnuovo-Mumford-regularity $2$ if and only if the corresponding graph $G$ is chordal.
\end{thm}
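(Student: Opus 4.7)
The strategy is to go through Hochster's formula, which expresses the graded Betti numbers of any Stanley--Reisner ring in terms of reduced simplicial homology of induced subcomplexes:
\[ \beta_{i,j}(S/I_\Delta) \;=\; \sum_{W\subseteq V,\;|W|=j}\dim_{\mathbb{R}} \tilde{H}_{j-i-1}(\Delta[W];\mathbb{R}). \]
In our setting the simplicial complex $\Delta$ whose Stanley--Reisner ideal is $I(G)$ is the \emph{clique complex} of $G$: its minimal non-faces are the pairs $\{i,j\}$ with $x_ix_j \in I(G)$, i.e.\ the non-edges of $G$. Thus the induced subcomplex $\Delta[W]$ is precisely the clique complex of the induced subgraph $G[W]$. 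Regularity $2$ of $S/I(G)$ is equivalent to $\beta_{i,j}(S/I(G))=0$ whenever $j\neq i+1$ (with $i\geq 1$), and Hochster translates this into the purely topological condition
\[ \tilde{H}_k\bigl(\Delta(G[W]);\mathbb{R}\bigr) \;=\; 0 \qquad \text{for all } W\subseteq V \text{ and all } k\geq 1. \]

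\textbf{Non-chordal $\Rightarrow$ non-linear.} I prove the easy direction by contrapositive. If $G$ has an induced cycle $C_k$ of length $k\geq 4$, set $W=V(C_k)$. Since $G[W]$ contains no triangles, its clique complex is just the $1$-dimensional cycle $C_k$ itself, which is homotopy equivalent to $S^1$; hence $\tilde{H}_1(\Delta(G[W]))\cong \mathbb{R}$. Hochster's formula then forces $\beta_{k-2,k}(S/I(G))\geq 1$, i.e.\ a syzygy at step $k-2$ of degree $k$ rather than the linear degree $k-1$, violating regularity~$2$.

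\textbf{Chordal $\Rightarrow$ linear.} I show by induction on $|V(H)|$ that every chordal graph $H$ satisfies $\tilde{H}_k(\Delta(H))=0$ for $k\geq 1$; since every induced subgraph of a chordal graph is chordal, this delivers the needed vanishing for all $W$. The key structural input is that a chordal graph always has a \emph{simplicial vertex} $v$ whose neighborhood $N(v)$ induces a clique (classical from a perfect elimination ordering, and also extractable from the clique-sum characterization given in the exercise above). If $v$ is isolated then $\Delta(H)=\Delta(H-v)\sqcup\{v\}$, which only alters $\tilde{H}_0$. Otherwise the closed star of $v$ is the simplex on $\{v\}\cup N(v)$, the link of $v$ is the simplex on $N(v)\subseteq \Delta(H-v)$, and both are contractible; the cone structure of the star over the link yields a deformation retraction $\Delta(H)\to\Delta(H-v)$. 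Either way $\tilde{H}_k(\Delta(H))\cong \tilde{H}_k(\Delta(H-v))$ for $k\geq 1$ and the induction closes.

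\textbf{Main obstacle.} All the genuine work is packaged into Hochster's formula, which I would cite as a black box; once the algebraic statement is translated into the homological vanishing above, one direction becomes an explicit computation on a cycle and the other a clean simplicial-vertex collapse. The one point requiring care in a full write-up is verifying that the star-to-link retraction glues compatibly with the identity on $\Delta(H-v)$ along the link, so that it actually extends to a deformation retraction of the whole complex.
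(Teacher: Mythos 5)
The paper states this theorem with a citation to Fr\"oberg but gives no proof, so there is no internal argument to compare against. Your proof via Hochster's formula is correct and is the standard modern route. The translation of the paper's notion of ``regularity $2$'' (linear resolution of $S/I$) into the vanishing of $\tilde{H}_k(\Delta(G[W]))$ for all $W$ and all $k\geq 1$ is right (one also needs $\tilde{H}_{-1}(\Delta[W])=0$ for nonempty $W$, which holds since every variable is a vertex of the clique complex). The detection step is clean: an induced $k$-cycle ($k\geq 4$) has no chords, hence the clique complex of $G[W]$ is the $1$-dimensional cycle, contributing $\tilde{H}_1\neq 0$ and hence $\beta_{k-2,k}\neq 0$. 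For the converse, the simplicial-vertex collapse is correct, and the gluing concern you flag resolves cleanly: the closed star of a simplicial vertex $v$ is the full simplex on $\{v\}\cup N(v)$, the link is the full simplex on $N(v)$, and any linear retraction of the former onto the latter is the identity on the link, so it agrees with the identity on $\Delta(H-v)$ along the overlap and extends by the pasting lemma to a deformation retraction of $\Delta(H)$ onto $\Delta(H-v)$. Together with the existence of a simplicial vertex in any chordal graph (perfect elimination ordering) and the fact that induced subgraphs of chordal graphs are chordal, the induction closes. No gaps.
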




We arrive at a common generalization of Hilbert's 1888 theorem and Theorem \ref{thm:graphs} which describes all cases of equality between nonnegative polynomials and sums of squares on a totally real variety.

\begin{thm}\cite{BPSV,MR3633773,MR3486176} \label{thm_reg_2}
	Let $X$ be a totally real variety in $\mathbb{P}^n$. Then the following equivalence holds:
	\[ P_X = \Sigma_X \iff X \text{ has regularity } 2
	 \]
	If $P_X = \Sigma_X$ holds, then we need at most $\dim X+1$ many squares for writing an element in $P_X$ as
	a sum of squares modulo $I(X)$.
\end{thm}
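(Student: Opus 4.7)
The plan is to argue via cone duality, following the BPSV strategy. Both $P_X$ and $\Sigma_X$ are closed convex cones in the finite-dimensional space $R(X)_2$, so $P_X = \Sigma_X$ is equivalent to $P_X^* = \Sigma_X^*$. Writing $R=R(X)$, I would first describe both dual cones. Because $X$ is totally real, a standard separation argument (together with closedness of $P_X$) identifies $P_X^*$ with the closed conical hull of point evaluations $\mathrm{ev}_p$ for $p \in X(\mathbb{R})$. On the SOS side, $\Sigma_X^*$ consists of those linear functionals $L : R_2 \to \mathbb{R}$ for which the bilinear form $B_L(\ell_1,\ell_2):=L(\ell_1 \ell_2)$ on $R_1$ is positive semidefinite. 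The theorem thus reduces to the statement: $X$ has regularity $2$ if and only if every such PSD functional $L$ is a nonnegative combination of real point evaluations on $X$.

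For the forward direction, fix $L\in \Sigma_X^*$ and let $K = \ker B_L \subseteq R_1$. Then $B_L$ descends to a positive definite inner product on $V = R_1/K$, and $K$ generates a homogeneous ideal cutting out a subscheme $\Gamma$. The heart of the argument is a Cayley--Bacharach-type lemma: regularity $2$ of $R$ (equivalently, that $I(X)$ is generated by quadrics with a purely linear minimal free resolution) forces $\Gamma$ to be scheme-theoretically contained in $X$, reduced, and real. Granted this, positive definiteness of $B_L$ on $V$ lets me decompose $L = \sum_{p \in \Gamma} \lambda_p \, \mathrm{ev}_p$ with $\lambda_p>0$, placing $L \in P_X^*$.

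For the converse, I would argue contrapositively. If $X$ does not have regularity $2$, there is a nonlinear entry somewhere in the minimal free resolution of $R$. I would use this pathology to build an $L \in \Sigma_X^*$ whose associated scheme $\Gamma$ fails to lie in $X(\mathbb{R})$ (either acquiring complex conjugate points or a non-reduced component). Such an $L$ lies in $\Sigma_X^* \setminus P_X^*$, and separation then produces $f \in P_X \setminus \Sigma_X$, generalizing the familiar constructions behind Motzkin-type examples.

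For the bound on the number of squares: from a generic extremal $L \in \Sigma_X^*$ produced above one can force $|\Gamma| \leq \dim X + 1$, using that a linear subspace of $\mathbb{P}^n$ contained in $X$ has dimension at most $\dim X$. Dualizing, every $f \in \Sigma_X$ admits a PSD Gram representation of rank at most $\dim X + 1$, which is exactly a sum of at most $\dim X + 1$ squares. The main obstacle is the Cayley--Bacharach-type lemma in paragraph two: turning the homological hypothesis ``regularity $2$'' into the geometric conclusion that PSD functionals are genuinely supported on real points of $X$ is where the real content of the theorem lies, and it is where the graph-theoretic chordality argument of Theorem \ref{thm:graphs} (via Fr\"oberg's theorem) must be replaced by an intrinsic algebro-geometric argument.
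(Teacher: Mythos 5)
Your route — dualize and prove a Cayley--Bacharach-type rigidity statement from $2$-regularity — is the strategy of the cited references (BPSV et al.), but it is not what the paper itself does. The paper explicitly defers the full proof and, in Lecture~2, establishes only one implication and only for irreducible non-degenerate $X$: that if $X$ is not of minimal degree (equivalently, by Eisenbud--Goto, not $2$-regular) then $\Sigma_X \subsetneq P_X$. Its method is projective-geometric and avoids duality entirely: from $\Sigma_X = P_X$ and the containments $\Sigma_X(v) = \Sigma_{X_v} \subseteq P_{X_v} \subseteq P_X(v)$, one gets $\Sigma_{X_v} = P_{X_v}$ for a general inner projection; iterating $\operatorname{codim} X - 1$ times produces a hypersurface $X'$ of degree $\geq 3$ with $\Sigma_{X'} = P_{X'}$, yet $I(X')_2 = 0$ so $\Sigma_{X'}$ consists of globally nonnegative quadrics, while one can exhibit a quadric nonnegative on $X'(\mathbb{R})$ that is negative elsewhere. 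Your contrapositive ("use the nonlinear syzygy to build $L \in \Sigma_X^* \setminus P_X^*$") aims at the same implication but is a hope rather than an argument; the paper's reduction to hypersurfaces is a concrete replacement for it.

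On the direction the paper does not prove at all (regularity $2 \Rightarrow$ equality, plus the $\dim X + 1$ Pythagoras bound), you correctly isolate the crux — showing that for $L \in \Sigma_X^*$ the associated scheme $\Gamma$ is a reduced, real, finite subscheme of $X$ — but you leave it as a stated lemma, so there is no proof here either. One technical point worth flagging: you describe $\Gamma$ as cut out by the ideal generated by the linear forms in $\ker B_L$. That ideal cuts out a linear subspace, not a finite scheme; in the actual argument $\Gamma$ comes from the full apolar/kernel data of the moment functional together with the quadrics of $I(X)$, not from the degree-one kernel alone. As sketched, the definition of $\Gamma$ does not set up a Cayley--Bacharach argument.
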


In the next lecture we will show some parts of Theorem \ref{thm_reg_2} for irreducible $X$. Irreducible $2$-regular varieties have been classified by Bertini and Del Pezzo under a different name. We will need some definitions first. 

\begin{defi}
A projective variety $X\subseteq \mathbb{P}^n$ is called \textit{non-degenerate}, if $X$ is not contained in a hyperplane in $\mathbb{P}^n$.
\end{defi}

\begin{exer}\label{ex:degree}
Let $X\subset \mathbb{C}\mathbb{P}^n$ be a non-degenerate irreducible complex projective variety. Show that $$\deg X\geq \operatorname{codim} X+1.$$ (Hint: Consider projecting away from a general point of $X$; see Theorem \ref{thm:proj}).
\end{exer}
 
An irreducible, non-degenerate variety is called a \textit{variety of minimal degree} if $\deg \left(X\right) = \operatorname{codim}\left(X\right) + 1$.

\begin{rmk}
	By a result of Eisenbud and Goto \cite{MR741934}, an irreducible projective variety $X$ has regularity $2$ if and only if $X$ is a variety of minimal degree.
\end{rmk}

We now take a brief detour to discuss the connection between sums of squares and semidefinite programming. This connection is of paramount importance in applications.

\subsection{Sums of Squares and Semidefinite Programming}
It is very important in applications that sums of squares provide a satisfactory certification of nonnegativity not only philosophically, but also algorithmically. This is due to the fact that testing whether a polynomial is a sum of squares can be done via \textit{semidefinite programming} \cite{BPT}. We first discuss semidefinite programming.
\subsubsection{A Gentle Introduction to Semidefinite Programming} Semidefinite programming is a generalization of linear programming where we also allow positive semidefiniteness constraints. More precisely, a general semidefinite program has the form:

\begin{align*}
& \underset{X}{\text{minimize}}
& & \langle C,X\rangle \\
& \text{subject to}
& & \langle A_i,X\rangle=b_i \; \; i = 1, \ldots, m,\\
& & & X \succeq 0.
\end{align*}

The inner product is the trace inner product $\langle A,B\rangle =\operatorname{tr} AB$. Geometrically, semidefinite programming problem optimizes a linear functional over an (affine) linear slice of the cone of positive semidefinite matrices given by the constraints $ \langle A_i,X\rangle=b_i$ . If $X$ is restricted to being a diagonal matrix then the above semidefinite program becomes a linear program, since positive-semidefiniteness constraint only ensures that the diagonal entries are nonnegative. 

Semidefinite programs can be efficiently solved using interior point methods \cite{MR1857264}. However, these methods will struggle when the matrix size grows large, and this leads to an important bottleneck for sums of squares methods. 
 
 We now demonstrate the connection between sums of squares and semidefinite programming via a small example.
  
 \begin{ex}
	Consider the univariate polynomial $\left(a + b x + c x^2\right)^2$.
	If we set $\mathbf{x} = \left(1,x,x^2\right)^T$ and $\mathbf{v} = \left(a,b,c\right)^T$ it can be written as
	\[ \left(a + b x + c x^2\right)^2 = \mathbf{v}^T \mathbf{x} \cdot \mathbf{v}^T \mathbf{x}
		= \mathbf{x}^T \hspace{-10pt} \underbrace{\left(\mathbf{v} \mathbf{v}^T \right)}_{\text{rank $1$ matrix}} \hspace{-8pt} \mathbf{x} \]
	where
	\[ A := \mathbf{v} \mathbf{v}^T = \begin{pmatrix}
		a^2	& ab		& ac\\
		ab	& b^2	& bc\\
		ac	& bc		& c^2\\
	\end{pmatrix} .\]
\end{ex}

This leads to the correspondence
\begin{align*}
	\text{squares } &\leftrightarrow \text{ Rank $1$  positive semidefinite matrices}\\
	\text{sums of squares } &\leftrightarrow \text{positive semidefinite matrices}.
\end{align*}


We now make this correspondence more explicit. Let $p(x)$ be the polynomial $1 + x + 3x^2 - x^3 + x^4$. To write $p$ as a sum of squares we need to find a posiitive semidefinite matrix $A$ such that
	\[ p = \begin{pmatrix}1 & x & x^2 \end{pmatrix}
		\underbrace{\begin{pmatrix}
			a_{11} & a_{12} & a_{13}\\
			a_{12} & a_{22} & a_{23}\\
			a_{13} & a_{23} & a_{33}
		\end{pmatrix}}_{= A \succcurlyeq 0}
		\begin{pmatrix} 1 \\ x \\ x^2 \end{pmatrix} .\]
	The sums over the antidiagonals give us the coefficients of $p$:
	\begin{align*}
		1 &: \; a_{11} = 1\\
		x &: \; 2 a_{12} = 1\\
		x^2 &: \, 2 a_{13} + a_{22} = 3\\
		x^3 &: \, 2 a_{23} = -1\\
		x^4 &: \, a_{33} = 1
	\end{align*}
	
	By considering the determinant of $A$ one can check that $A$ is positive semidefinite for $\frac{1}{4}(5-\sqrt{5})\leq a_{13} \leq -1$ and the choices of $a_{13}=-1$ and $a_{13}=\frac{1}{4}(5-\sqrt{5})$ will lead to rank $2$ matrices, i.e. ways of writing $p(x)$ as a sum of two squares. 

We see from the above discussion that the question "Is a polynomial a sum of squares ?" is equivalent to "Does there exists a positive semidefinite matrix with certain linear constraints on its entries?". This is precisely the question that is addressed by semidefinite programming. Moreover, the least number of squares needed for a representation as a sum of squares is the least rank of an admissible positive semidefinite matrix.



\section{Lecture 2}

For this lecture let $X$ be a totally real, non-degenerate projective variety.
We let $ R := \mathbb{R}\left[x_0, \dots , x_n\right]/I(X)$ be the homogeneous coordinate ring of $X$.
As before $P_X, \Sigma_X \subset R_2$ are the set of quadratic forms nonnegative on $X$, and the set of sums of squares of linear forms on $X$
(mod $I(X)$). 

\begin{exer} Show that
$P_X$ and $\Sigma_X$ are closed, full-dimensional, pointed, (i.e. $p, -p \in \Sigma_X$ implies $p=0$) convex cones in $R_2$.
\end{exer}

We now discuss some parts of the above exercise to get more intuition on convex geometry of cones $\Sigma_X$ and $P_X$.
We can identify $\Sigma_{\mathbb{P}^n}=P_{\mathbb{P}^n}$ with the cone of $(n+1)\times (n+1)$ positive semidefinite matrices  $\mathcal{S}_+^{n+1}$, which is a full-dimensional cone in the vector space of symmetric matrices. By definition of $\Sigma_X$ it is the projection of $\mathcal{S}_+^{n+1}$ with kernel $I(X)$ and therefore it is full-dimensional in $R_2$. Thus, the larger cone $P_X$ is also full-dimensional. It may be tempting to conclude that closedness of $\Sigma_X$ also follows immediately since it is a projection of a closed cone. However, projection of a closed cone may fail to be closed.

\begin{exer}
	The projection of a closed cone is not closed in general. Let $\pi$ denote the projection map and $C$ a closed cone in $\mathbb{R}^n$. Show that the following holds:
	\begin{enumerate}[label=(\alph*)]
		\item If $C \cap \ker \pi = \left\{0\right\}$ , then $\pi\left(C\right)$ is closed.
		\item If $C \cap \ker \pi$ contains an interior point of $C$, then $\pi\left(C\right)$ is a vector space.
		\item If $C \cap \ker \pi \subset \partial C$, then construct an example where $\pi\left(C\right)$ is closed, and an example where it is not closed.
	\end{enumerate}
\end{exer}

We claim that since $X$ is non-degenerate and totally real the first case holds: $\mathcal{S}_+^{n+1} \cap I(X) = \left\{0\right\}$. Suppose this is not true. Then there exists $q \in \Sigma_{\mathbb{P}^n} \cap I(X)$.
By definition of $\mathcal{S}^{n+1}_+$ there are linear forms $\ell_i$ such that $q = \sum \ell_i^2$.
It follows $X \subset \mathcal{V}\left(\sum_i \ell_i^2 \right)$. Since $X$ is totally real and
\[ \sum_i \ell_i^2 \left(p\right) = 0 \iff \left( \forall i : \; \ell_i\left(p\right) = 0 \right) \]
for real points $p$, we arrive at the contradiction $X \subset \cap_i \mathcal{V}\left(\ell_i\right)$.

We will gain some insight into the geometry of $P_X$ and $\Sigma_X$ by \textit{projecting} $X$ \textit{away from a point} $v\in X$. Such a projection is often called an \textit{inner projection of $X$}. If $v\notin X$ the projection away from $v$ is called an \textit{outer projection} of $X$. We summarize some properties of inner projections below:

\begin{thm}\label{thm:proj}\cite{MR1416564}
	Let $X\subseteq \mathbb{C}\mathbb{P}^n$ be a non-degenerate, projective variety and $v$ a general point of $X$.
	If $X$ is not a hypersurface then the following holds:
	\begin{enumerate}[label=(\roman*)]
		\item $\deg X_v = \deg X - 1$,
		\item $\dim X = \dim  X_v$,
		\item $X_v$ is non-degenerate,
		\item $\dim I(X_v)_2 \geq \dim I(X)_2-\operatorname{codim X}.$
	\end{enumerate}
	If $X$ is a hypersurface, then $X_v$ is the whole projective space.
\end{thm}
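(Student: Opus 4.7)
The plan is to set coordinates so that $v=[1:0:\cdots:0]$, making $\pi_v$ the linear projection $[x_0:\cdots:x_n]\mapsto [x_1:\cdots:x_n]$ and $X_v$ the closure of $\pi_v(X\setminus\{v\})$ in $\mathbb{P}^{n-1}$. If $X$ is a hypersurface of degree $d$, then by B\'ezout a general line through the smooth point $v$ meets $X$ at $v$ plus $d-1$ further points, so every direction in $\mathbb{P}^{n-1}$ is hit and $X_v=\mathbb{P}^{n-1}$. The rest of the plan assumes $X$ is not a hypersurface. For (iii) the argument is immediate: if $\ell(x_1,\ldots,x_n)$ is a linear form vanishing on $X_v$, the same $\ell$ viewed on $\mathbb{P}^n$ vanishes on $X\setminus\{v\}$, hence on $X$, contradicting non-degeneracy.

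For (ii) I would study the fibers of $\pi_v|_{X\setminus\{v\}}$: the fiber over $\pi_v(w)$ is $(\overline{vw}\cap X)\setminus\{v\}$. If the generic fiber were positive dimensional, a generic line through $v$ meeting $X$ outside $v$ would be contained in $X$, forcing $X$ to be the cone over $X_v$ with vertex $v$. For a general (hence smooth) $v$, the tangent cone of a cone at its vertex equals the cone itself and is contained in $T_vX$, forcing $X=T_vX$; a non-degenerate linear variety is all of $\mathbb{P}^n$, contradicting the non-hypersurface hypothesis. Hence the generic fiber is $0$-dimensional and $\dim X_v=\dim X$.

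For (i) I would cut $X$ by a general linear subspace $\Lambda\subset\mathbb{P}^n$ of codimension $\dim X$ through $v$. Then $|X\cap\Lambda|=\deg X$, $v$ is a simple intersection by transversality at the smooth point, and $\pi_v(\Lambda\setminus\{v\})$ is a linear subspace of $\mathbb{P}^{n-1}$ of codimension $\dim X_v$. The remaining $\deg X-1$ intersection points map into $X_v\cap\pi_v(\Lambda)$, a set of size $\deg X_v$; if $\pi_v|_X$ is birational this is a bijection and $\deg X_v=\deg X-1$. The main obstacle is birationality: one must rule out that a general line through $v$ meeting $X$ in one further point in fact meets it in still more points, i.e.\ that $X$ is swept out by trisecants through a general point. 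A dimension count on the trisecant variety, together with non-degeneracy and the codimension hypothesis, handles this (a version of the classical trisecant lemma).

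Finally, (iv) is where the bound on $\dim I(X_v)_2$ is most concrete, and I expect the cleanest argument. For $q\in I(X)_2$, $v\in X$ forces the coefficient of $x_0^2$ to vanish, so $q$ decomposes uniquely as $q=x_0\ell(x_1,\ldots,x_n)+q'(x_1,\ldots,x_n)$. Define $\phi\colon I(X)_2\to\mathbb{R}[x_1,\ldots,x_n]_1$ by $\phi(q)=\ell$; the kernel consists of those $q$ with no $x_0$-term, and the identification $\ker\phi=I(X_v)_2$ follows from the fact that such a $q'$ vanishes on $\pi_v(X\setminus\{v\})$ iff it vanishes on $X_v$. For the image: $q\in I(X)$ forces $dq|_{\tilde v}$ to vanish on the affine tangent space $T_{\tilde v}C(X)=\mathbb{R} e_0\oplus T$, with $T\subset\{x_0=0\}\cong\mathbb{R}^n$ of dimension $\dim X$; a direct partial-derivative computation shows $dq|_{\tilde v}$ corresponds to $\ell$ viewed as a linear form on $\{x_0=0\}$, so $\ell$ must annihilate $T$. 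The space of such $\ell$ has dimension $n-\dim X=\operatorname{codim}X$, and rank-nullity then yields $\dim I(X_v)_2=\dim\ker\phi\geq\dim I(X)_2-\operatorname{codim}X$.
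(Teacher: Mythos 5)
The paper does not prove this theorem; it is cited to Harris's \emph{Algebraic Geometry: A First Course}, so there is no in-paper argument to compare against and I assess your proposal on its own terms.

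Parts (iii) and (iv) of your plan are essentially complete and correct. The argument for (iii) is exactly right. For (iv), the decomposition $q = x_0\ell + q'$, the identification $\ker\phi = I(X)_2\cap \mathbb{C}[x_1,\dots,x_n]_2 = I(X_v)_2$ via the elimination ideal, and the observation that the image of $\phi$ lies in the conormal space of $X$ at $v$ (dimension $\operatorname{codim} X$) is a clean, correct proof of the quadric-count inequality. One small slip: you write $\mathbb{R}[x_1,\dots,x_n]$ and $\mathbb{R} e_0$, but the theorem concerns $X\subseteq\mathbb{C}\mathbb{P}^n$, so the computation should be over $\mathbb{C}$. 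Part (ii) is also sound in substance: positive-dimensional generic fiber forces $X$ to be a cone with vertex $v$, and a cone with a smooth vertex is a linear space. Your final clause, however, is off: ``$X = \mathbb{P}^n$'' does not contradict the \emph{non-hypersurface} hypothesis (after all, $\mathbb{P}^n$ is not a hypersurface); the right contradiction is that $X$ would fail to be a proper, non-degenerate subvariety of codimension at least two.

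The genuine gap is in part (i). Your linear-section count correctly reduces $\deg X_v = \deg X - 1$ to the statement that $\pi_v$ is \emph{birational} onto $X_v$ for general $v\in X$, and you explicitly defer that to ``a dimension count on the trisecant variety\dots (a version of the classical trisecant lemma).'' But this birationality is precisely the hard content of the theorem; it is not a routine remark. One must construct the incidence variety of secant lines through $v$ meeting $X$ in a further point, show that the locus of trisecants (or higher secants) through a general point has the wrong dimension, and use irreducibility, non-degeneracy, and $\operatorname{codim} X\geq 2$ in an essential way. As written, you have identified where the difficulty sits but have not supplied the argument, so (i) is asserted rather than proved. If you want to complete the proposal, you should either state and prove the precise general-position lemma you need, or at minimum cite it sharply (e.g.\ Harris, Prop.\ 18.10 and the surrounding discussion of general projections), because the remainder of the theorem, and the Castelnuovo bound downstream of it, genuinely leans on that step.
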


We now discuss some faces of the cones $P_X$ and $\Sigma_X$.
\begin{defi}
	We denote the set of real points of $X$ by $X\left(\mathbb{R}\right)$. Let $v$ be in $X\left(\mathbb{R}\right)$. We set:
	\begin{align*}
		\Sigma_X(v) &:= \left\{ f \in \Sigma_X \, \middle| \, f(v) = 0 \right\}\\
		P_X(v) &:= \left\{f \in P_X \, \middle| \, f(v) = 0 \right\}
	\end{align*}
\end{defi}

It is clear that $\Sigma_X(v)$ and $P_X(v)$ are \textit{exposed faces} of their respective cones, with the point evaluation functional $\ell_v$ definining the supporting hyperplane. The elements of $\Sigma_X(v)$ are sums of squares (mod $I(X)$) of linear forms that vanish on $v$. Crucially the cone
$\Sigma_X(v)$ is actually the cone of sums of squares on the projected variety $X_v$!
To see this, we may assume, after a change of basis, that $v = \left[1:0:\dots : 0\right]$. Then the elements of $\Sigma_X(v)$ are sums of squares of linear forms that do not contain the variable $x_0$. In other words: these elements are sums of squares of linear forms in variables $x_1,\dots,x_n$
modulo $I(X) \cap \mathbb{R}\left[x_1, \dots , x_n\right]$. This elimination on the algebraic side means projection on the geometric side. Therefore we see that $\Sigma_X (v) = \Sigma_{X_v}$.

For nonnegative forms the situations is a bit different: 
\begin{exer} Show that
$P_X(v) \supseteq P_{X_v}$.
\end{exer}

We are now in a position to prove the main theorem of this lecture. Recall from Exercise \ref{ex:degree} that any complex irreducible and non-degenerate variety satisfies the inequality $\deg X \geq \operatorname{codim} X+1$ and the ones where the equality holds are called varieties of minimal degree.

\begin{thm}
Suppose that $X$ is a totally real irreducible non-degenerate variety which is not a variety of minimal degree. Then  $\Sigma_X \subsetneqq P_X$.
\end{thm}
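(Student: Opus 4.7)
The plan is to argue by contradiction: assume $\Sigma_X=P_X$ and descend via inner projections from general real smooth points until $X$ becomes a hypersurface of degree at least three, where a duality argument produces a contradiction.

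\textbf{Inner-projection induction.} Assume $\Sigma_X=P_X$ and that $X$ is not a hypersurface, so $\operatorname{codim} X\geq 2$. Since $X$ is totally real it contains a smooth real point, and we may pick a general smooth $v\in X(\mathbb{R})$ to which Theorem~\ref{thm:proj} applies. The inner projection $X_v\subset\mathbb{P}^{n-1}$ then satisfies $\dim X_v=\dim X$, $\deg X_v=\deg X-1$, and is non-degenerate. Moreover $X_v$ is irreducible (projection has irreducible domain) and totally real (the real locus of $X\setminus\{v\}$ is Zariski-dense there, and its image is Zariski-dense in $X_v(\mathbb{R})$). Because dimension is preserved and codimension drops by one, the excess $\deg-\operatorname{codim}$ is preserved, so $X_v$ is still not of minimal degree. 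Combining the identities $\Sigma_X(v)=\Sigma_{X_v}$ and $P_X(v)\supseteq P_{X_v}$ from the discussion preceding the theorem with the assumption $\Sigma_X(v)=P_X(v)$, we get
\[
\Sigma_{X_v}=\Sigma_X(v)=P_X(v)\supseteq P_{X_v}\supseteq\Sigma_{X_v},
\]
forcing $\Sigma_{X_v}=P_{X_v}$. Iterating, the ambient dimension drops by one at each step while all other hypotheses persist, so after finitely many steps we arrive at a totally real, irreducible, non-degenerate hypersurface $Y\subset\mathbb{P}^m$ with $\deg Y\geq 3$ still satisfying $\Sigma_Y=P_Y$.

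\textbf{Hypersurface base case.} Since $\deg Y\geq 3$, the ideal $I(Y)$ has no quadric generators, so $R(Y)_2$ equals the full space of quadratic forms in $m+1$ variables, and $\Sigma_Y$ is the full cone of PSD quadratic forms. Dualizing, $\Sigma_Y^*$ is again the PSD cone, while $P_Y^*$ is the closed conic hull of the rank-one matrices $\{vv^T:v\in Y(\mathbb{R})\}$. The equality $\Sigma_Y=P_Y$ forces these duals to coincide. But the extreme rays of the PSD cone are exactly the rank-one rays $[ww^T]$ with $w\in\mathbb{R}^{m+1}\setminus\{0\}$, and by Milman's converse to Krein--Milman each extreme ray must lie in the closure of any cone-generating set. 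Since $Y(\mathbb{R})\subset\mathbb{P}^m(\mathbb{R})$ is compact and $v\mapsto[vv^T]$ is continuous, $\{[vv^T]:v\in Y(\mathbb{R})\}$ is already closed in the projectivized PSD cone. Hence every $[ww^T]$ equals some $[vv^T]$ with $v\in Y(\mathbb{R})$, i.e.\ $[w]=[v]\in Y(\mathbb{R})$. As $w$ ranges over $\mathbb{R}^{m+1}\setminus\{0\}$, this gives $\mathbb{P}^m(\mathbb{R})\subseteq Y(\mathbb{R})$, and by Zariski-density of $\mathbb{P}^m(\mathbb{R})$ in $\mathbb{P}^m$ we conclude $Y=\mathbb{P}^m$, contradicting $\operatorname{codim} Y=1$.

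\textbf{Main obstacle.} The most delicate point is the preservation of \emph{total reality} (and of irreducibility) at each stage of the descent: the numerical content of Theorem~\ref{thm:proj} is not enough, and one must check that a sufficiently general smooth real center $v\in X(\mathbb{R})$ yields an $X_v$ whose real locus remains Zariski-dense. Once this is in place, invariance of the excess $\deg-\operatorname{codim}\geq 2$ guarantees that the induction terminates precisely at a hypersurface of degree $\geq 3$, where the duality/extreme-ray argument closes the proof.
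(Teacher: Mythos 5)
Your reduction via inner projections is exactly the paper's argument: assume $\Sigma_X=P_X$, pass the equality through the face identity $\Sigma_{X_v}=\Sigma_X(v)=P_X(v)\supseteq P_{X_v}$, observe that $\deg-\operatorname{codim}$ is invariant under projection away from a general point, and iterate until you reach a hypersurface $X'$ of degree at least $3$ with $\Sigma_{X'}=P_{X'}$ and $I(X')_2=0$, hence $\Sigma_{X'}$ equal to the full PSD cone. Where you depart is the base case. The paper closes the contradiction in the primal: it exhibits an explicit quadratic form that is nonnegative on the hypersurface but not globally PSD, by projecting $X'(\mathbb{R})$ to the sphere, choosing a point $v$ off its compact image, and building a quadratic negative near $v$ and positive elsewhere. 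You instead dualize: $\Sigma_{X'}^*$ is the self-dual PSD cone, $P_{X'}^*$ is the conical hull of the rank-one point evaluations $vv^T$, $v\in X'(\mathbb{R})$, and since that set is compact and misses the origin, the hull is closed, so Carath\'eodory plus extremality forces every rank-one ray of the PSD cone to arise from a point of $X'(\mathbb{R})$ --- i.e.\ $X'(\mathbb{R})=\mathbb{P}^m(\mathbb{R})$, a contradiction. These are genuinely dual arguments establishing the same fact: your version trades the hands-on construction for the clean structural statement that the extreme rays of the PSD cone are exactly the rank-ones, and it avoids the need to actually write down a separating quadratic; the paper's version is more concrete and more explicitly geometric. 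Both are correct. Your remark at the end about preservation of irreducibility and total reality under projection from a general real smooth center is a real subtlety that the paper glosses over by asserting projection from ``general points in $X(\mathbb{R})$''; you are right that Theorem~\ref{thm:proj}, being a statement about complex general points, does not by itself deliver total reality of $X_v$, and a careful treatment has to use Zariski-density of $X(\mathbb{R})$ together with the fact that the projection map restricted to $X\setminus\{v\}$ is dominant onto $X_v$.
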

\begin{proof}
If we suppose this is not the case, then we have $\Sigma_X = P_X$ and by the inclusion above we get
\[ \Sigma_{X_v} = \Sigma_X(v) = P_X(v) \supset P_{X_v} \supset \Sigma_{X_v} \]
and therefore $\Sigma_{X_v} = P_{X_v}$.


Projecting away from $\mathrm{codim} X - 1$ many general points in $X(\mathbb{R})$ we get a a hypersurface $X'$
with $\Sigma_{X'} = P_{X'}$. Suppose that $X$ is not of minimal degree. Then 
 $\mathrm{deg} X' \geq 3$, and since $X'$ is a hypersurface, no quadrics vanish on $X'$.
Therefore the forms in $\Sigma_{X'}$ are simply sums of squares of linear forms, since the degree $2$ part of the vanishing ideal of $X'$ is empty.
Thus the forms in $\Sigma_{X'}$ are globally nonnegative.
On the other hand we can find a form in $P_{X'}$ that is not globally nonnegative and we get the desired contradiction. The details are explained in the following exercise.
\end{proof}

\begin{exer}
Let $X$ be a totally real hypersurface in $\mathbb{P}^n$ of degree at least $3$. Show that there exists a quadratic form $Q$ that is nonnegative on $X$, but $Q$ is not globally nonnegative. (Hint: Project $X\left(\mathbb{R}\right)$ onto the unit sphere $S$. Take $v$ outside of the image $Y$ of $X\left(\mathbb{R}\right)$.
Since $X\left(\mathbb{R}\right)$ is closed, $Y$ is compact. Make a quadratic form that is negative on a small neighborhood of $v$ and positive everywhere else).
\end{exer}
We can use the idea of sequentially projecting away from general points to prove some classical results in algebraic geometry.

\begin{thm}[Castelnuovo's bound] 
	Let $X$ be an irreducible non-degenerate projective variety. Then:
	\[ \dim I(X)_2 \leq \binom{\mathrm{codim} X + 1}{2}. \]
	Furthermore, equality holds if and only if $X$ is a variety of minimal degree.
\end{thm}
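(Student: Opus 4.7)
My plan is to prove both the bound and the equality characterization by induction on $c := \operatorname{codim} X$, using inner projections as the reduction step. The main tool is Theorem \ref{thm:proj}: projecting $X$ from a general point $v \in X$ produces an irreducible, non-degenerate variety $X_v$ of codimension $c - 1$, dimension $\dim X$, and degree $\deg X - 1$, and crucially satisfies part (iv), namely $\dim I(X_v)_2 \geq \dim I(X)_2 - c$.

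For the base case $c = 1$, the variety $X$ is an irreducible hypersurface of degree $d$, so $\dim I(X)_2 = 1$ when $d = 2$ and $0$ when $d \geq 3$; this matches $\binom{2}{2} = 1$, with equality exactly when $d = 2 = \operatorname{codim} X + 1$, i.e., precisely the minimal degree case. For the inductive step with $c \geq 2$, since $X$ is not a hypersurface, Theorem \ref{thm:proj} applies. I chain the inductive hypothesis $\dim I(X_v)_2 \leq \binom{c}{2}$ with part (iv) to obtain
$$\dim I(X)_2 \;\leq\; \dim I(X_v)_2 + c \;\leq\; \binom{c}{2} + c \;=\; \binom{c+1}{2},$$
which proves the bound.

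The easy direction of the equality claim, ``$\dim I(X)_2 = \binom{c+1}{2}$ implies $X$ has minimal degree,'' falls out of this chain: equality forces $\dim I(X_v)_2 = \binom{c}{2}$, so by induction $X_v$ is of minimal degree with $\deg X_v = c$, giving $\deg X = c + 1 = \operatorname{codim} X + 1$.

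The harder direction, ``minimal degree implies equality,'' is the main obstacle, because the projection inequality only controls $\dim I(X)_2$ from above and cannot be run in reverse through the induction alone. To close this gap I would invoke the Eisenbud--Goto result cited in the remark preceding the statement: varieties of minimal degree are exactly the $2$-regular varieties, so the minimal free resolution of $S/I(X)$ is pure and linear. A short Hilbert series computation then suffices: the series is $(1 + c\,t)/(1-t)^{\dim X + 1}$, and reading off the degree-$2$ coefficient yields $\dim R_2 = \binom{\dim X + 2}{2} + c(\dim X + 1)$, whence $\dim I(X)_2 = \binom{n+2}{2} - \dim R_2 = \binom{c+1}{2}$ after elementary algebra with $n = \dim X + c$. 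An alternative would be to use the Bertini--Del~Pezzo classification of varieties of minimal degree (rational normal scrolls, the Veronese surface in $\mathbb{P}^5$, and cones over them) and verify equality case by case; the archetypal example is the rational normal curve of degree $c+1$ in $\mathbb{P}^{c+1}$, whose ideal is generated by the $\binom{c+1}{2}$ many $2 \times 2$ minors of a $2 \times (c+1)$ Hankel matrix, realizing the bound by construction.
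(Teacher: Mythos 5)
Your induction on $\operatorname{codim} X$ using part~(iv) of Theorem~\ref{thm:proj}, with the hypersurface base case, is exactly the paper's argument, just packaged as an explicit induction rather than a chain of $\operatorname{codim} X - 1$ projections; the ``equality $\Rightarrow$ minimal degree'' direction is also read off from the chain in the same way. Where you genuinely go beyond the paper is the converse: the paper's proof sketch explicitly stops short (``We do not show that equality indeed holds for all varieties of minimal degree''), whereas you supply an argument for it. Your Hilbert series route is correct, but note that $2$-regularity alone does not hand you the series $(1+ct)/(1-t)^{\dim X + 1}$ --- you need to also use that varieties of minimal degree are arithmetically Cohen--Macaulay (so the Hilbert series is $h(t)/(1-t)^{\dim X + 1}$ with $h$ a polynomial) together with the fact that the regularity bound forces $\deg h \leq 1$, and then $h(1) = \deg X = c+1$ pins down $h(t) = 1 + ct$; the arithmetic from there gives $\dim I(X)_2 = \binom{c+1}{2}$ as you state. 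Your alternative via the Bertini--Del Pezzo classification is also valid and arguably closer to the spirit of the sources cited in the surrounding remark. Either way, you have filled a gap the paper deliberately left open.
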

\begin{proof}[Proof sketch.]
	When $X$ is a hypersurface, the inequality is clear. And equality holds if and only if $X$ is a quadratic hypersurface, which is a variety of minimal degree. Now we reduce to the hypersurface case by repeatedly projecting away from general points.
	Let $v$ be a general point of $X$. By Theorem \ref{thm:proj} part (4) the ideal of $X_v$ loses at most $\operatorname{codim} X$ quadrics compared to the ideal of $X$.
	\[ \dim I(X)_2  - \dim I(X_v)_2  \leq \operatorname{codim} X.\]
	In total we take $\operatorname{codim} X-1$ projections to get to the hypersurface case where we lose at most $1$ additional quadric, so the total number of lost quadrics is at most $1+2+\dots+\operatorname{codim} X$ as desired. Furthermore, equality only holds if projecting away from $\operatorname{codim} X-1$ general points of $X$ we get a quadratic hypersurface. Then by Theorem \ref{thm:proj} parts (1)-(3) $X$ is a variety of minimal degree.
	We do not show that equality indeed holds for all varieties of minimal degree.
\end{proof}
The paper \cite{Pyth} introduced quadratic persistence of a variety to measure how quickly quadrics are lost when successively projecting away from points of a variety $X$. This quantity also gives us a bound on the Pythagoras number of a real projective variety (See Definition \ref{def:Pyth} and Theorem \ref{thm:qp-pyth} below).
\begin{defi}
	The \textit{quadratic persistence} of a variety $X$, denoted $\operatorname{qp}\left(X\right)$,
	is defined as the least number $k$ of points $v_1, \dots , v_k \in X$ such that
	$X_{v_1\dots v_k}$ has no quadratics in its vanishing ideal.
\end{defi}
\begin{rmk} Observe that $\Sigma_{X_{v_1 \dots  v_k}} = \Sigma_X\left(v_1, \dots , v_k\right)$ \end{rmk}

The definition of quadratic persistence allows an arbitrary subset of points from which to project. It is an important observation that for irreducible varieties $X$ projection away from a general subset of points will lose the maximal number of quadrics, and therefore it suffices to consider general subsets to figure out quadratic persistence.

\begin{lem}\cite{Pyth}
	Choosing general points $v_1, \dots , v_k$ on an irreducible variety $X$ computes the quadratic persistence. 
\end{lem}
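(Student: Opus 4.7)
The plan is to realize $(v_1, \ldots, v_k) \mapsto \dim I(X_{v_1 \cdots v_k})_2$ as an upper semicontinuous function on a dense open subset of $X^k$, and then use irreducibility of $X^k$ to conclude that its minimum value is attained on a Zariski open dense set.

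First I would rewrite the dimension intrinsically. For a tuple $(v_1, \ldots, v_k)$ let $V(v_1, \ldots, v_k) \subset R_1$ denote the subspace of linear forms vanishing at every $v_i$. Pullback along the projection from $\mathrm{span}(v_1, \ldots, v_k)$ identifies $I(X_{v_1 \cdots v_k})_2$ with the kernel of the multiplication map
$$\mu(v_1, \ldots, v_k) \colon \operatorname{Sym}^2 V(v_1, \ldots, v_k) \longrightarrow R_2.$$
Establishing this identification is the one technical point one has to be careful about, since $I(X_{v_1 \cdots v_k})$ is a priori defined by a saturation with respect to the projection center; in degree $2$ and for tuples in general position it reduces to the clean statement above.

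Next I would restrict to the Zariski open subset $U_0 \subset X^k$ on which the $v_i$ are projectively independent; this is nonempty (hence dense, since $X$ is irreducible) because $X$ is non-degenerate. On $U_0$ the dimension of $V(v_1, \ldots, v_k)$ equals the constant $n+1-k$, so $\dim \operatorname{Sym}^2 V(v_1, \ldots, v_k)$ is constant. The map $\mu$ is assembled algebraically from evaluation functionals at the $v_i$, so its rank is a lower semicontinuous function on $U_0$. Consequently $\dim \operatorname{Sym}^2 V - \operatorname{rank}\mu = \dim I(X_{v_1 \cdots v_k})_2$ is upper semicontinuous on $U_0$, and the locus on which it attains its minimum is Zariski open and dense in $U_0$, and therefore in the irreducible variety $X^k$.

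To conclude, set $k = \operatorname{qp}(X)$ and suppose $(v_1^0, \ldots, v_k^0) \in X^k$ realizes $\dim I(X_{v_1^0 \cdots v_k^0})_2 = 0$. If this tuple were projectively dependent, selecting a maximal projectively independent subtuple of size $j < k$ would give the same projection center and hence the same projected variety, contradicting the minimality of $k = \operatorname{qp}(X)$. Therefore $(v_1^0, \ldots, v_k^0) \in U_0$, and by the previous paragraph the locus $\{\dim I(X_{v_1 \cdots v_k})_2 = 0\}$ contains a nonempty Zariski open subset of $X^k$, i.e., it holds for a general tuple. The main obstacle in a rigorous write-up is the clean identification of $\ker \mu$ with $I(X_{v_1 \cdots v_k})_2$ on all of $U_0$ (not just a smaller open set), which requires handling saturation along the projection center with some care.
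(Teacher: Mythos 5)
Your semicontinuity argument is correct and is the natural route: realize $\dim I(X_{v_1\cdots v_k})_2$ as the corank of the algebraic family of multiplication maps $\mu(v_1,\ldots,v_k)\colon \operatorname{Sym}^2 V(v_1,\ldots,v_k)\to R_2$ over the open locus $U_0\subset X^k$ of projectively independent tuples, then use lower semicontinuity of rank together with irreducibility of $X^k$ to conclude that the minimum corank is attained on a Zariski dense open set, and finally rule out dependent tuples achieving $\dim I=0$ by passing to a maximal independent subtuple. (The paper gives no proof, citing \cite{Pyth}, so there is no in-text argument to compare against, but this is the standard way such genericity statements are proved.)

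The one caveat you raise, about saturation in identifying $\ker\mu$ with $I(X_{v_1\cdots v_k})_2$ on all of $U_0$, is in fact not an obstacle in degree $2$. For any tuple in $U_0$ the center of projection $\Lambda=\operatorname{span}(v_1,\ldots,v_k)$ has dimension $k-1<n$, so nondegeneracy and irreducibility of $X$ force $X\not\subset\Lambda$. Consequently, for a quadric $q\in\operatorname{Sym}^2 V(v_1,\ldots,v_k)$, the pullback $\pi^*q$ vanishes on all of $X$ if and only if it vanishes on the dense open $X\setminus\Lambda$, if and only if $q$ vanishes on $\pi(X\setminus\Lambda)$, which is Zariski dense in $X_{v_1\cdots v_k}$; this is exactly the condition $q\in I(X_{v_1\cdots v_k})_2$. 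So the identification $\ker\mu=I(X_{v_1\cdots v_k})_2$ holds on all of $U_0$ with no need to worry about saturation, and the upper semicontinuity of $\dim\ker\mu$ there is exactly what you need.
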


Quadratic persistence is not well-understood for many varieties. For instance it is an open question to compute the quadratic persistence of Veronese embeddings of $\mathbb{P}^n$ for $n>2$. This question also has a relation to the dimension of certain catalecticant varieties \cite{MR1735271}.
For $n=2$ answer is known by work of \cite{MR1735271} and also proved in \cite{Pyth}. 
\begin{thm}
$\operatorname{qp}(\nu_d(\mathbb{P}^2))=\binom{d+1}{2}.$
\end{thm}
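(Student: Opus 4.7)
The plan is to reformulate the quadratic persistence via the Veronese parametrization and then attack the two inequalities separately. Under the Veronese identifications $R_1 \cong S_d := \mathbb{R}[x_0,x_1,x_2]_d$ and $R_2 \cong S_{2d}$, projecting $\nu_d(\mathbb{P}^2)$ away from $\nu_d(p_1),\dots,\nu_d(p_k)$ replaces $R_1$ by $W_k := \{f \in S_d : f(p_i) = 0 \text{ for } i = 1,\dots,k\}$, and no quadrics remain in the ideal of the projected variety exactly when the multiplication map $\mu_k : \operatorname{Sym}^2 W_k \to S_{2d}$ is injective. By the preceding Lemma, we may take $p_1,\dots,p_k$ general.

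For the lower bound $\operatorname{qp}(\nu_d(\mathbb{P}^2)) \geq \binom{d+1}{2}$ (assuming $d \geq 2$), I would bound the image of $\mu_k$ by the space $\mathcal{D}_k \subseteq S_{2d}$ of forms of degree $2d$ vanishing to order $\geq 2$ at each $p_i$. For $k < \binom{d+1}{2}$ we are outside the Alexander--Hirschowitz exceptional cases for double points in $\mathbb{P}^2$ (which are only $(D,k) = (2,2)$ and $(4,5)$), so $\dim \mathcal{D}_k = \binom{2d+2}{2} - 3k$. Writing $k = \binom{d+1}{2} - j$ with $j \geq 1$, the dimension count
\[ \dim \operatorname{Sym}^2 W_k - \dim \mathcal{D}_k = \binom{d+2+j}{2} - \binom{d+2}{2} - 3j \geq d - 1 \geq 1 \]
forces $\mu_k$ to have a nontrivial kernel.

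For the upper bound $\operatorname{qp}(\nu_d(\mathbb{P}^2)) \leq \binom{d+1}{2}$, I would specialize to an explicit configuration and invoke semicontinuity. Fix $d+1$ general lines $\ell_0,\dots,\ell_d$ in $\mathbb{P}^2$ and take $p_{ij} := \ell_i \cap \ell_j$ for $0 \leq i < j \leq d$, giving $k = \binom{d+1}{2}$ points. Set $L := \prod_l \ell_l$ and $e_i := L/\ell_i$. Each $e_i$ vanishes at every $p_{jk}$, so $e_i \in W_k$. An induction on $d$, using the decomposition $f = c_0 e_0 + \ell_0 g$ (obtained by reading off $c_0$ from $f|_{\ell_0}$, whose $d$ zeros $p_{01},\dots,p_{0d}$ force it to be proportional to $e_0|_{\ell_0}$, and then applying the statement to the smaller configuration $\ell_1,\dots,\ell_d$) shows that $\{e_0,\dots,e_d\}$ is a basis of $W_k$. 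It then suffices to prove that the $\binom{d+2}{2}$ products $g_{ij} := e_i e_j = L^2/(\ell_i \ell_j)$ for $i \leq j$ are linearly independent in $S_{2d}$. In local coordinates $u = \ell_i$, $v = \ell_j$ at $p_{ij}$ (the remaining $\ell_m$ are nonvanishing units), direct substitution yields leading-order behavior $g_{ij} \sim c\,uv$, $g_{ii} \sim c\,v^2$, $g_{jj} \sim c\,u^2$ with nonzero constants $c$, while every other $g_{mn}$ picks up an extra factor of $u$ or $v$ and vanishes to order $\geq 3$ at $p_{ij}$. A hypothetical relation $\sum c_{mn} g_{mn} = 0$ then forces $c_{ii}=c_{jj}=c_{ij}=0$; ranging over all $i<j$ kills every coefficient. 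Semicontinuity of $\dim \ker \mu_k$ upgrades injectivity from this special configuration to generic $(p_1,\dots,p_k)$.

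The main obstacle is the last verification: cleanly isolating the three diagonal products contributing to the order-$2$ Taylor data at each $p_{ij}$. The bookkeeping of which $\ell_l$ contribute factors of $u$, of $v$, or are units at $p_{ij}$ is what ensures that only $g_{ii}$, $g_{jj}$, $g_{ij}$ reach order exactly $2$ there; once this structure is set up, the independence of $u^2, uv, v^2$ as binary quadratic forms closes the argument.
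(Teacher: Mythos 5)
The paper does not give its own proof of this theorem; it only cites \cite{MR1735271} and \cite{Pyth}, so I am assessing your argument on its merits. Your reformulation is correct: projecting $\nu_d(\mathbb{P}^2)$ away from $\nu_d(p_1),\dots,\nu_d(p_k)$ reduces the degree-one part to $W_k$, and the projected variety has no quadrics iff $\mu_k\colon \operatorname{Sym}^2 W_k\to S_{2d}$ is injective, and the Lemma preceding the theorem lets you work with general points. For the lower bound your containment $\operatorname{Im}\mu_k\subseteq\mathcal{D}_k$ is right, the exclusion of the Alexander--Hirschowitz exceptional cases $(2,2)$ and $(4,5)$ is verified correctly for $D=2d$ with $d\geq 2$ and $k<\binom{d+1}{2}$, and the arithmetic $\binom{d+2+j}{2}-\binom{d+2}{2}-3j = j(d-1)+\binom{j}{2}\geq d-1$ is correct, so $\mu_k$ has a kernel; you should flag the standing hypothesis $d\geq 2$ (the $d=1$ case is degenerate). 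For the upper bound, your star configuration argument is also sound: the $e_i=L/\ell_i$ span $W_k$ by the restriction-to-$\ell_0$ induction, and at each node $p_{ij}$ the products $e_m e_n$ vanish to order $\geq 3$ except for $e_i^2, e_j^2, e_i e_j$, which contribute independent quadratic jets $v^2,u^2,uv$; this kills any linear relation. Two small remarks. First, the final appeal to semicontinuity is unnecessary: quadratic persistence is a minimum over configurations, so exhibiting a single configuration of $\binom{d+1}{2}$ points with $\mu_k$ injective already proves $\operatorname{qp}\leq\binom{d+1}{2}$. Second, what your argument buys, compared to the cited sources: \cite{MR1735271} approaches this through the geometry of catalecticant loci and power-sum decompositions, while your route is a self-contained, elementary dimension count (lower bound) plus an explicit star-configuration computation (upper bound), making the result independent of the apolarity machinery at the cost of invoking Alexander--Hirschowitz for the interpolation step.
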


For $n>2$ we have the following conjecture (cf. \cite{MR3467328} and \cite{MR3417682}), which, if true, allows one to quickly compute quadratic persistence of Veronese embeddings of $\mathbb{P}^n$: 
\begin{conj}
	In each projection step we lose the maximal number of quadrics, which is given by the codimension. This conjecture holds for $n=2$.
\end{conj}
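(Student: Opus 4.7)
The plan is to leverage the fact that $\operatorname{qp}(\nu_d(\mathbb{P}^2)) = \binom{d+1}{2}$ is \emph{exactly} the minimum number of steps compatible with losing one codimension's worth of quadrics at each step, and show that any deficit at a single step would contradict this known value of the quadratic persistence. Set $X = \nu_d(\mathbb{P}^2) \subset \mathbb{P}^N$ with $N+1=\binom{d+2}{2}$, $c := \operatorname{codim} X = \binom{d+2}{2} - 3$, $k := \binom{d+1}{2}$, and for a general sequence $v_1,\ldots,v_k\in X(\mathbb{R})$ let $a_i := \dim I(X_{v_1,\ldots,v_i})_2$.

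The first step is to iterate Theorem \ref{thm:proj}. For $d \geq 3$ one checks $c - k + 1 \geq 2$, so by parts (1)--(3) every intermediate variety $X_{v_1,\dots,v_{i-1}}$ with $1 \le i \le k$ is irreducible, non-degenerate, of dimension $2$ and codimension $c - i + 1 \geq 2$, hence not a hypersurface; part (4) then gives $a_i \geq a_{i-1} - (c - i + 1)$. The case $d=2$ reaches a quadric hypersurface in $\mathbb{P}^3$ at step $3$, where the same inequality (loss $\leq$ codim $=1$) still holds since its projection from a point is $\mathbb{P}^2$, which has no quadrics in its ideal. Telescoping,
$$a_0 - a_k \;\leq\; \sum_{i=1}^{k}(c - i + 1) \;=\; kc - \binom{k}{2}.$$

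The second step is to verify the identity $\dim I(X)_2 = kc - \binom{k}{2}$. A direct Hilbert-function calculation gives $\dim I(X)_2 = \binom{N+2}{2} - \binom{2d+2}{2} = \tfrac{1}{8}d(d-1)(d+1)(d+6)$; substituting the definitions of $c$ and $k$ into $kc - \binom{k}{2}$ yields the same polynomial $\tfrac{1}{8}d(d-1)(d+1)(d+6)$. Combined with $a_k = 0$ from $\operatorname{qp}(X) = k$, the telescoping inequality above is saturated, so each individual bound $a_{i-1} - a_i \leq c - i + 1$ must be an equality. In other words, at each projection exactly $\operatorname{codim} X_{v_1,\ldots,v_{i-1}}$ quadrics are lost, which is the content of the conjecture for $n=2$.

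The geometry of the argument is essentially cost-free once Theorem \ref{thm:proj} and the cited value of $\operatorname{qp}$ are in hand; the main (and only) obstacle is the polynomial identity
$$\binom{\tbinom{d+2}{2}+1}{2} - \binom{2d+2}{2} \;=\; \binom{d+1}{2}\!\left(\binom{d+2}{2} - 3\right) - \binom{\binom{d+1}{2}}{2},$$
which is elementary but must be carried out carefully. A convenient reduction is to write $c = k + d - 2$, which turns the right-hand side into $\tfrac{1}{2}k(k + 2d - 3)$, and then to expand both sides as polynomials in $d$ to see they agree.
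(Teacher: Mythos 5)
The paper itself does not prove this statement; it records the $n=2$ case as known with citations to external work, so there is no ``paper's own proof'' to compare against. Your derivation, however, is a genuine and correct proof within the paper's own framework, and the bookkeeping checks out. The core idea is sound: Theorem \ref{thm:proj}(4) gives the per-step upper bound $a_{i-1}-a_i\le \operatorname{codim} X_{v_1,\dots,v_{i-1}}=c-i+1$; the quadratic-persistence theorem $\operatorname{qp}(\nu_d(\mathbb{P}^2))=\binom{d+1}{2}$ forces $a_k=0$ for $k=\binom{d+1}{2}$ general points; and the telescoped upper bound $kc-\binom{k}{2}$ equals $\dim I(X)_2$ exactly, so every intermediate inequality is saturated. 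I verified the polynomial identity independently: writing $c=k+d-2$ one gets $kc-\binom{k}{2}=\tfrac{1}{2}k(k+2d-3)=\tfrac{1}{8}d(d-1)(d+1)(d+6)$, which agrees with $\binom{\binom{d+2}{2}+1}{2}-\binom{2d+2}{2}$. Your treatment of $d=2$ (where the last step hits a quadric hypersurface) is also correct, though for $d=2$ the Veronese is of minimal degree and the case is not really of independent interest.

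Two caveats worth flagging explicitly. First, you should note that your proof is conditional on the cited value $\operatorname{qp}(\nu_d(\mathbb{P}^2))=\binom{d+1}{2}$; if the proof of that theorem in the cited sources were itself to proceed by establishing maximal quadric loss at each step, your argument would just be unwinding it. The argument you give essentially shows the conjecture for $n=2$ is \emph{equivalent} to the quadratic-persistence value, via the numerical saturation, so it has honest content only if the persistence is proved by a different route (as it appears to be in the Iarrobino--Kanev framing via catalecticants). Second, iterating Theorem \ref{thm:proj} requires that each intermediate $X_{v_1,\dots,v_{i-1}}$ is again an irreducible non-degenerate variety and that a general point of $X$ maps to a general point of the intermediate projection; this is standard (and is exactly what the paper's proof of Castelnuovo's bound tacitly uses), but since you are invoking a theorem stated only for one projection step, you should at least acknowledge the reduction. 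With those caveats noted, the proposal is correct and is a clean way to see why the $n=2$ case of the conjecture holds.
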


We illustrate this conjecture by an example (the conjecture for this example can be experimentally verified by a suitable choice of points):

\begin{ex}
Consider $X=\nu_3(\mathbb{P}^3)$. Then $\dim R(X)_1=\binom{6}{3}=20$, and so $X\subset \mathbb{P}^{19}$ with $\operatorname{codim} X=16$. Moreover,  $\dim R(X)_2=\binom{9}{6}=84$, while $\dim I(X)_2=\binom{21}{2}-\dim R(X)_2=126$. Losing the maximal number of quadrics at each step means that we lose $16$ quadrics in the first projection, $15$ quadrics in the second projection, $14$ quadrics in the third and so on. We have $16+15+14+\dots+7+6+5=126$, and so the conjecture says that $\operatorname{qp} \nu_3(\mathbb{P}^3)=12$.
\end{ex}
We now explain the relevance of quadratic persistence to sums of squares.

\begin{defi}\label{def:Pyth}
	The \textit{Pythagoras number} of a variety $X$, denoted $\Pi\left(X\right)$, is the least number $k$ such that any sum of squares in $\Sigma_X$
	can be written as a sum of $k$ squares.
\end{defi}

\begin{rmk}
	In the graph/matrix case the Pythagoras number is equal to the least number $k$ such that any partially specified matrix, which is completeable to a positive semidefinite matrix, can be completed to a rank $k$ positive semidefinite matrix. Pythagoras number in this case has been called the \textit{Gram dimension} of the graph $G$ \cite{MR3006041}.
\end{rmk}

\begin{thm}\label{thm:qp-pyth}
	Let $X$ be a non-degenerate, totally real, irreducible variety in $\mathbb{P}^n$. Then:
	\[ \Pi\left(X\right) \geq n + 1 - \operatorname{qp}\left(X\right). \]
\end{thm}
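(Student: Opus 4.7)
The plan is to exhibit an explicit element of $\Sigma_X$ whose every sum-of-squares representation must use at least $n+1-\operatorname{qp}(X)$ squares. The mechanism is to make the element vanish at a maximal set of general points on $X$: this forces every summand in any SOS decomposition to vanish there as well, which confines all summands to a low-dimensional space of linear forms, and then the quadratic-persistence hypothesis lets me convert ``rank of the associated quadratic form'' into an honest lower bound on the number of squares.

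Concretely, set $k=\operatorname{qp}(X)$ and, using the preceding Lemma together with total reality of $X$ (so $X(\mathbb{R})$ is Zariski dense), choose general real points $v_1,\dots,v_k\in X(\mathbb{R})$ realising the quadratic persistence. Let $V\subseteq R(X)_1$ be the subspace of linear forms vanishing at every $v_i$; since $X$ is non-degenerate in $\mathbb{P}^n$ and the $v_i$ are general, they impose $k$ independent conditions on $R(X)_1$, giving $\dim V=n+1-k$. The equality $I(X_{v_1\cdots v_k})_2=0$ from the definition of $\operatorname{qp}(X)$ says exactly that the natural multiplication map $\mathrm{Sym}^2 V\hookrightarrow R(X)_2$ is injective: quadratic forms in the linear forms of $V$ cannot collide modulo $I(X)$.

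Now pick any basis $m_1,\dots,m_{n+1-k}$ of $V$ and set $f=m_1^2+\cdots+m_{n+1-k}^2\in\Sigma_X$. Given any SOS representation $f=\tilde\ell_1^2+\cdots+\tilde\ell_r^2$ in $R(X)$, evaluation at each $v_i$ gives $\sum_j\tilde\ell_j(v_i)^2=0$, forcing $\tilde\ell_j\in V$ for all $j$. Writing $\tilde\ell_j=\sum_a c_{ja}m_a$ with coefficient vector $c_j\in\mathbb{R}^{n+1-k}$ and using injectivity of $\mathrm{Sym}^2 V\hookrightarrow R(X)_2$, the identity $f=\sum_j\tilde\ell_j^2$ in $R(X)_2$ becomes the matrix identity $\sum_j c_jc_j^T=I_{n+1-k}$ in $\mathrm{Sym}^2 V$. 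Since the right-hand side has rank $n+1-k$ and the left-hand side has rank at most $r$, we get $r\geq n+1-k$, hence $\Pi(X)\geq n+1-\operatorname{qp}(X)$.

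The main thing to get right is the setup rather than the final linear algebra. Two small but essential points: (i) the points $v_i$ must be simultaneously general enough to compute $\operatorname{qp}(X)$ and to impose independent conditions on linear forms, and they must be chosen in $X(\mathbb{R})$; both are guaranteed by combining non-degeneracy, the Lemma stating that generic points realise $\operatorname{qp}$, and total reality (Zariski density of $X(\mathbb{R})$). (ii) The identification of $\mathrm{Sym}^2 V$ with the degree-two part of the coordinate ring of the iterated projection $X_{v_1\cdots v_k}$, together with its injection into $R(X)_2$, must be read off from the definitions — it is really a restatement of $\operatorname{qp}(X)=k$ rather than a separate step.
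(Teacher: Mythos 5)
Your proof is correct and takes essentially the same route as the paper's. The paper compresses the argument to the single observation that the face $\Sigma_X(v_1,\dots,v_k)$ is isomorphic to $\mathcal{S}_+^{\,n+1-k}$ with $k=\operatorname{qp}(X)$ and general $v_i$, and you have simply unpacked this: the injectivity of $\mathrm{Sym}^2 V\hookrightarrow R(X)_2$ \emph{is} that isomorphism, and your full-rank element $f=\sum_a m_a^2$ is the witness (the identity matrix in $\mathcal{S}_+^{\,n+1-k}$) forcing at least $n+1-k$ squares.
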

\begin{proof}
	Let $v_1, \dots v_k$ be general points where $k = \operatorname{qp}\left(X\right)$.
	Then $\Sigma_X\left(v_1 , \dots , v_k\right) \cong \mathcal{S}_+^{n+1-k}$ holds.
	On this face we need $n+1-k = n + 1 - \operatorname{qp}\left(X\right)$ many squares.
\end{proof}

We currently do not have any examples where the above bound is not tight.

\begin{problem}[Open Question]
	Find a variety $X$ such that $\Pi\left(X\right) > n+1 - \operatorname{qp}\left(X\right)$.
\end{problem}
 Although the quadratic persistence of Veronese embeddings of $\mathbb{P}^2$ is known, the Pythagoras number is not completely understood. It is known that $\Pi(\nu_3(\mathbb{P}^2))=4$. For higher degrees the following result of Scheiderer almost settles the question. The lower bound on the Pythagoras number comes from quadratic persistence.
\begin{thm} \cite{MR3648509}
	Let $d \geq 4$. Then:
	\[ \Pi\left(\nu_d\left(\mathbb{P}^2\right)\right) = \begin{cases} d+1 \text{, or} \\ d+2. \end{cases} \]
\end{thm}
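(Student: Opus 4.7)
The plan is to prove the two inequalities $d+1 \leq \Pi(\nu_d(\mathbb{P}^2)) \leq d+2$ separately. The lower bound is immediate from results already assembled in the excerpt: with $X = \nu_d(\mathbb{P}^2) \hookrightarrow \mathbb{P}^n$ we have $n+1 = \binom{d+2}{2}$, and since $\operatorname{qp}(X) = \binom{d+1}{2}$ by the theorem of \cite{MR1735271, Pyth} quoted just above, Theorem \ref{thm:qp-pyth} gives
\[
\Pi(X) \;\geq\; \binom{d+2}{2} - \binom{d+1}{2} \;=\; d+1.
\]
Thus the genuine content lies entirely in the upper bound $\Pi(X) \leq d+2$.

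For the upper bound I would work in the Gram-matrix picture from Lecture~1: any $p \in \Sigma_X$ corresponds to a positive semidefinite matrix $A$ with $p = \mathbf{m}^T A\, \mathbf{m}$ modulo $I(X)$, where $\mathbf{m}$ is the vector of degree-$d$ monomials in three variables, and the minimum number of squares in a representation of $p$ equals the minimum rank of $A$ in its Gram spectrahedron. The strategy is iterated face descent. If a rank-minimizing $A$ has rank $r > d+2$, one shows $p$ must lie on the boundary of $\Sigma_X$ (else local perturbations of $A$ within the spectrahedron would allow a further rank drop), and hence $p$ vanishes at some real point $v \in X(\mathbb{R})$; by the identification $\Sigma_X(v) = \Sigma_{X_v}$ established earlier, $p$ becomes a sum of squares on the inner projection $X_v$, which is again a Veronese-type surface in an ambient projective space of one smaller dimension. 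Iterating, after $k$ projections from general real points of $X$ one lands in $\Sigma_{X_{v_1 \dots v_k}}$; once $k$ reaches the quadratic persistence, the projected variety has no quadrics in its ideal, so sums of squares on it are ordinary sums of squares of linear forms on the ambient space, whose number can be controlled by a Carathéodory-type count.

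The main obstacle is the quantitative step: carried out naively, the face-descent argument only yields a bound of order $n+1 - \operatorname{qp}(X) + (\text{slack})$ that need not equal $d+2$. Producing the sharp $d+2$ requires exploiting special features of the ternary Veronese — the transitive $\mathrm{PGL}_3$ action on $X$, the explicit structure of the catalecticant ideals on $\nu_d(\mathbb{P}^2)$, and Alexander--Hirschowitz-type dimension counts for secant varieties of $X$ — to ensure that each projection reduces the required number of squares by exactly one and that no pathological boundary stratum of $\Sigma_X$ forces an extra square. This is the content of Scheiderer's analysis in \cite{MR3648509}, and I expect this \emph{uniform} control over all of $\Sigma_X$, rather than merely over a generic open subset, to be the deepest ingredient of the proof.
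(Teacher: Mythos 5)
Your lower bound argument is exactly right and is precisely what the paper has in mind: with $X=\nu_d(\mathbb{P}^2)\subset\mathbb{P}^n$, $n+1=\binom{d+2}{2}$, the quoted value $\operatorname{qp}(X)=\binom{d+1}{2}$, and Theorem~\ref{thm:qp-pyth} give $\Pi(X)\geq\binom{d+2}{2}-\binom{d+1}{2}=d+1$, which is the remark in the text (``The lower bound on the Pythagoras number comes from quadratic persistence''). For the upper bound, you should be aware that the paper does not prove it either: the theorem is simply cited from Scheiderer's paper, so there is no in-text argument to compare against.

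That said, one step in your face-descent sketch would fail as written. You argue that a rank-minimizing Gram matrix of rank $r>d+2$ forces $p$ to lie on the boundary of $\Sigma_X$, ``else local perturbations of $A$ within the spectrahedron would allow a further rank drop.'' This conflates two different boundaries. A form $p$ strictly positive on $X$ lies in the interior of $\Sigma_X$ and vanishes at no real point of $X$, so $\Sigma_X(v)=\Sigma_{X_v}$ is unavailable; yet its Gram spectrahedron is a compact affine slice of $\mathcal{S}^{n+1}_+$, and the minimum rank on that slice is certainly attained on a proper face of the spectrahedron, regardless of where $p$ sits in $\Sigma_X$. So face descent on the Gram spectrahedron does not translate into inner projection of $X$ unless $p$ itself is already on $\partial\Sigma_X$. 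Handling interior $p$ requires a separate argument (a density and closedness argument, or a deformation of $p$ rather than of $A$), and this is part of what makes Scheiderer's uniform upper bound genuinely hard. You are right to flag the quantitative control as the deep ingredient, but the qualitative reduction to inner projections is already more delicate than the sketch suggests.
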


\section{Lecture 3}

The main motivating question for this lecture is the following:
\begin{center}What is the relationship between $P_X$ and $\Sigma_X$ when they are not equal?\end{center}
We will address this question in two main ways: a direct quantitative comparison and comparison via the dual cones.

\subsection{Globally Nonnegative Polynomials and Sums of Squares}
One possible way of comparing the relative size of two cones is to slice both cones with an affine hyperplane in such a way that both slices are compact convex sets and then compare the sections, for instance via volume. If we want to think of convex sets $K$ and $(1+\varepsilon)K$ as having roughly equal size for small $\varepsilon$ then volume is not a good measure of size, when the dimension $D$ of $K$ is large, since $\operatorname{vol} (1+\varepsilon)K=(1+\varepsilon)^D \operatorname{vol} K$. A good measure to use is $(\operatorname{vol} K)^{1/D}$.

For the cones $\Sigma_{n,2d}$ and $P_{n,2d}$ a good choice of slicing hyperplane is given by the affine hyperplane of polynomials of average $1$ on the unit sphere $\mathcal{S}^{n-1}$ in $\mathbb{R}^n$. Let $\bar{\Sigma}_{n,2d}$ and $\bar{P}_{n,2d}$ be the corresponding slices. We can compare the relative size of these slices in the asymptotic regime where the degree $d$ is fixed and the number of variables $n$ goes to infinity (See \cite{MR2254649} and \cite[Chapter 4]{BPT}).
\begin{thm}\label{thm:vol}
Let the degree $2d$ be fixed. There exist constants $c_1$ and $c_2$ independent of $n$ such that:
$$c_1n^{\frac{d-1}{2}}\leq\left( \frac{\operatorname{vol} \bar{P}_{n,2d}}{\operatorname{vol} \bar{\Sigma}_{n,2d}}\right)^{1/D}\leq c_2n^{\frac{d-1}{2}}.$$
\end{thm}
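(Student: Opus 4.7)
The plan is to compare the two bodies via their mean widths with respect to a natural $O(n)$-invariant Euclidean structure on $R_{n,2d}$, following the original argument of Blekherman in the cited paper. The broad idea is that $\bar{P}_{n,2d}$ and $\bar{\Sigma}_{n,2d}$ are both invariant under the orthogonal group $O(n)$, so they can be compared to Euclidean balls quite tightly.

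First I would recenter: translate both sections by the unique $O(n)$-invariant polynomial $p_0\equiv 1$ on $S^{n-1}$ (a scalar multiple of $\|x\|^{2d}$, which lies in the mean-$1$ hyperplane $H$). This produces $O(n)$-symmetric convex bodies $\widetilde{P}=\bar{P}_{n,2d}-p_0$ and $\widetilde{\Sigma}=\bar{\Sigma}_{n,2d}-p_0$ in the $D$-dimensional subspace $E=\{g\in R_{n,2d}:\int_{S^{n-1}} g\,d\sigma=0\}$. Equip $E$ with the $L^2(S^{n-1},d\sigma)$ inner product; then $O(n)$ acts by isometries, and by Urysohn's inequality together with its near-reverse form for $O(n)$-symmetric bodies, the ratio $(\operatorname{vol}\widetilde{P}/\operatorname{vol}\widetilde{\Sigma})^{1/D}$ is comparable up to absolute constants to the ratio of mean widths $w(\widetilde{P})/w(\widetilde{\Sigma})$, where $w(K)=\mathbb{E}_g\sup_{f\in K}\langle f,g\rangle$ for $g$ a standard Gaussian in $(E,\langle\cdot,\cdot\rangle_{L^2})$. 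So it suffices to show $w(\widetilde{\Sigma})=O(n^{d/2}\sqrt{D})$ and $w(\widetilde{P})=\Omega(n^{d-1/2}\sqrt{D})$.

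For the outer bound on $\widetilde{\Sigma}$: any $f=\sum g_i^2\in\bar{\Sigma}$ with $\int f\,d\sigma=1$ satisfies $\sum\|g_i\|_2^2=1$. Pairing with a Gaussian direction $u\in E$ gives $\langle f,u\rangle=\sum \int g_i^2 u\,d\sigma\leq \|\mathcal{M}_u\|_{\mathrm{op}}$ where $\mathcal{M}_u$ is multiplication by $u$ acting on $R_{n,d}$. Averaging over $u$ and applying the sharp $L^2$-to-$L^\infty$ reproducing-kernel estimate $\sup_{x\in S^{n-1}} g(x)^2\leq \binom{n+d-1}{d}\|g\|_2^2$ for $g\in R_{n,d}$ yields the claimed $w(\widetilde{\Sigma})$ bound.

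For the inner bound on $\widetilde{P}$: use the extreme rays $q_\ell:=\ell^{2d}/c_{n,d}\in\bar{P}$ parametrized by $\ell\in S^{n-1}$, where $c_{n,d}=\int_{S^{n-1}}(\ell\cdot x)^{2d}\,d\sigma$. For a Gaussian direction $g\in E$ one has $\langle q_\ell - p_0,g\rangle = c'\,g^\dagger(\ell)$, where $g^\dagger$ is the image of $g$ under an $O(n)$-equivariant smoothing operator built from integration against $(\ell\cdot x)^{2d}$. Decomposing $E$ into spherical harmonic isotypic components $V_2\oplus V_4\oplus\cdots\oplus V_{2d}$ and tracking the top piece, the key point is that the degree-$2d$ component of $g^\dagger$ has $L^\infty$-norm of order $\sqrt{\dim V_{2d}}\sim n^{d-1/2}$ times its $L^2$-norm, with the pointwise peak realized at zonal configurations. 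Choosing $\ell$ to realize this peak gives the lower bound $w(\widetilde{P})=\Omega(n^{d-1/2}\sqrt{D})$. Dividing the two estimates gives the lower bound on the volume ratio in the theorem; the upper bound follows by a dual-cone argument using the inclusion $\Sigma_{n,2d}\subseteq P_{n,2d}$ and the same harmonic decomposition.

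The hard step is the lower bound on $w(\widetilde{P})$. Generically the supremum of a Gaussian process over a parameter space $S^{n-1}$ is damped by $\sqrt{\log D}$ factors, so one must actually produce the growth rate $n^{d-1/2}$ rather than just $\sqrt{D}\sqrt{\log D}$. The payoff comes entirely from the $O(n)$-equivariance forcing a nontrivial top-harmonic projection of $g^\dagger$, together with the deterministic fact that zonal harmonics of degree $2d$ on $S^{n-1}$ are concentrated near their pole on a neighborhood of diameter $\sim n^{-1/2}$, so the pointwise maximum is attained at many well-separated directions. Making this last estimate precise, without giving up $\log$-factors that would blur the sharp exponent $(d-1)/2$, is the technical core of the argument.
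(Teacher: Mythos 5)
Your proposal is in the right general territory (the $L^2(S^{n-1})$ Euclidean structure, $O(n)$-invariance, Gaussian widths, harmonic decomposition), but two of its pivotal steps do not hold up, and the overall architecture is not the one used in the cited source.

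\textbf{The reverse-Urysohn step.} You translate the volume ratio into a ratio of mean widths by invoking ``Urysohn's inequality together with its near-reverse form for $O(n)$-symmetric bodies.'' Urysohn gives only the one-sided bound $(\operatorname{vol} K/\operatorname{vol} B)^{1/D}\leq w(K)/w(B)$; there is no reverse inequality of comparable strength for $O(n)$-invariant bodies in $E = V_2\oplus\cdots\oplus V_{2d}$. Note that $O(n)$ does not act irreducibly on $E$ when $d\geq 2$, so even John-position arguments only give a factor $\sqrt D$, which is far too lossy for an exponent of $(d-1)/2$. The actual argument does not compare the two mean widths at all. It establishes, for each body separately, an \emph{upper} bound on the volume radius via the support function $(\operatorname{vol} K/\operatorname{vol} B)^{1/D}\leq w(K)/w(B)$, and an independent \emph{lower} bound via the gauge-function integral $(\operatorname{vol} K/\operatorname{vol} B)^{1/D}\geq\bigl(\int_{S^{D-1}}\|\theta\|_K\,d\mu\bigr)^{-1}$ (equivalently, via an estimate on the mean width of the \emph{polar} body). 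For $\widetilde P$ the gauge is $\|g\|_{\widetilde P}=\max\bigl(\max_{S^{n-1}}(-g),0\bigr)$, so the lower bound on $\operatorname{vol}\bar P$ reduces to an upper bound on the expected maximum of a Gaussian degree-$2d$ form on $S^{n-1}$; for $\widetilde\Sigma$ the gauge analysis runs through the dual (moment-matrix) description of the sums-of-squares cone. Your proposal never touches these gauge quantities, so even if your two mean-width estimates were right they would not pin down the volumes from both sides.

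\textbf{The lower bound on $w(\widetilde P)$.} Your lower bound uses only the extreme directions $q_\ell = (\ell\cdot x)^{2d}/c_{n,d}$. After recentering, the Gaussian process $\ell\mapsto\langle q_\ell-p_0,g\rangle$ has per-point variance $\|q_\ell-p_0\|_2^2 = c_{n,2d}/c_{n,d}^2 - 1$, which is a \emph{constant} depending only on $d$ (since $c_{n,d}\sim(2d-1)!!/n^d$). The Dudley/Sudakov estimate for the supremum of this process over the $(n-1)$-manifold $S^{n-1}$ gives $\mathbb{E}\sup_\ell\langle q_\ell-p_0,g\rangle = O(\sqrt n)$ (up to logarithms). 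This is several orders of magnitude below your claimed $\Omega(n^{d-1/2}\sqrt D)\sim n^{2d-1/2}$. The $\sqrt{\dim V_{2d}}\sim n^{d-1/2}$ ratio of $L^\infty$ to $L^2$ norm you invoke is achieved only by the deterministic zonal harmonic; for a random Gaussian component the ratio is $\sim\sqrt{\log\dim V_{2d}}$, and your sketch about ``many well-separated peaks'' does not supply the missing $n^{d-1}$. Relatedly, your claimed exponents for $w(\widetilde\Sigma)$ and $w(\widetilde P)$ appear to carry an extraneous $\sqrt D$: dividing by $w(B)\sim\sqrt D$ you would get volume radii of order $n^{d/2}$ and $n^{d-1/2}$ rather than the correct $n^{-d/2}$ and $n^{-1/2}$, which signals a normalization error on top of the conceptual gaps.

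Your $w(\widetilde\Sigma)$ upper bound via the operator norm $\|\mathcal M_u\|$ of the quadratic form $g\mapsto\int g^2 u\,d\sigma$ on $R_{n,d}$ is the one ingredient that is close in spirit to the actual argument, although the reproducing-kernel inequality $\|M_u\|\leq\|u\|_\infty$ by itself is not sharp enough and the precise estimate requires more work.
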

We see therefore that if the degree $2d$ is at least  $4$ and the number of variables $n$ is sufficiently large, the cone $P_{n,2d}$ of nonnegative polynomials is significantly larger then the cone of sums of squares $\Sigma_{n,2d}$.

We now start analyzing the relation between nonnegative polynomials and sums of squares using the dual cones $P_X^*$ and $\Sigma_X^*$.
\subsection{Dual Cones}
We begin with some preliminaries on duality in convex geometry. Let $V$ be a finite dimensional vector space and $V^*$ be the dual vector space of linear functionals on $V$.
Further let $C$ be a convex cone and $$C^* = \left\{ \ell \in V^* \, \middle| \, \ell\left(v\right) \geq 0, \hspace{.2 cm}\text{for all} \hspace{.2 cm} v \in C \right\}$$ be its dual cone.


\begin{rmk}
	The following theorem is known as biduality (or bipolarity) theorem \cite[Chapter IV]{MR1940576}: the double-dual cone $\left(C^*\right)^*$ is the closure of $C$. If $C$ is closed then $\left(C^*\right)^* = C$.
\end{rmk}

Let $X$ be a totally real variety. The dual space $R_2^*$ contains a distinguished set of linear functionals. For any point $v \in X$ we can pick an affine representative $\tilde{v} \in \mathbb{R}^{n+1}$, and evaluation of forms in $R_2$ on $\tilde{v}$ produces a linear functional from $R_2$ to $\mathbb{R}$. Since the choice fo affine representative only changes the resulting linear functional by a constant, we will mildly abuse notation and call the resulting linear functional $\ell_v$, without specifying an affine representative $\tilde{v}$.

We set
\[ A_X = \operatorname{Conical Hull}\left\{ \ell_v \, \middle| \, \ell_v \text{ is a point evaluation on } v \text{ for } [v] \in X \right\} .\]

\begin{exer}
Show that $A_X$ is a closed convex cone in $R_2^*$.
\end{exer}
It is now not hard to show that $A_X$ is in fact equal to $P_X^*$.
\begin{lem} We have $A_X={P}_X^*$.
\end{lem}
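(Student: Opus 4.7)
The plan is to establish the two inclusions $A_X\subseteq P_X^*$ and $P_X^*\subseteq A_X$ separately, where the first is immediate from the definitions and the second is an application of the separation theorem, using closedness of $A_X$ from the preceding exercise.

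First I would handle the easy inclusion $A_X\subseteq P_X^*$. Pick any $v\in X(\mathbb{R})$ and any $f\in P_X$; by the very definition of $P_X$ we have $\ell_v(f)=f(\tilde v)\geq 0$, so $\ell_v\in P_X^*$. Since $P_X^*$ is a convex cone (in fact closed), it contains the conical hull of all such $\ell_v$, giving $A_X\subseteq P_X^*$.

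For the reverse inclusion I would argue by separation. Suppose $\ell\in R_2^*$ lies in $P_X^*$ but not in $A_X$. Since $A_X$ is a closed convex cone (this is the content of the preceding exercise: one normalizes affine representatives so that $v\mapsto \ell_v$ is continuous on the compact set $X(\mathbb{R})\cap S^n$, and the conical hull of a compact set not containing the origin is closed), the Hahn--Banach separation theorem produces an element $f\in (R_2^*)^*=R_2$ with $\ell(f)<0$ and $\mu(f)\geq 0$ for every $\mu\in A_X$. Specializing $\mu=\ell_v$ for an arbitrary point $[v]\in X(\mathbb{R})$ gives $f(\tilde v)\geq 0$; since this holds for every real point of $X$, we conclude $f\in P_X$. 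But then $\ell(f)\geq 0$, contradicting the strict inequality. Hence $P_X^*\subseteq A_X$, and combined with the first inclusion we obtain $A_X=P_X^*$.

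The main obstacle is really the closedness of $A_X$, which underpins the use of separation; without it we would only recover $P_X^*=\overline{A_X}$ via biduality. This is why the preparatory exercise (together with the fact that $\ell_v\neq 0$ whenever $v\neq 0$, which follows because any linear form $\ell$ with $\ell(v)\neq 0$ yields a quadratic form $\ell^2$ with $\ell_v(\ell^2)\neq 0$) is essential: it guarantees that the origin is not in the compact base of the cone generated by the normalized point evaluations, so that the conical hull is genuinely closed rather than merely a cone with a dense subset.
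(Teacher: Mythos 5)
Your proof is correct and follows essentially the same route as the paper: both arguments hinge on the tautological fact that point evaluations on $X$ certify nonnegativity (your easy inclusion $A_X\subseteq P_X^*$ is exactly the dual form of the paper's observation $A_X^*=P_X$), and on the closedness of $A_X$. The only difference is presentational: you unpack the biduality theorem into an explicit Hahn--Banach separation argument for the hard inclusion $P_X^*\subseteq A_X$, whereas the paper invokes biduality as a packaged tool.
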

\begin{proof}
	The fact that $A_X^* = {P}_X$ follows tautologically from the definition of what it means to be nonnegative on $X$. Using biduality we see that
	$A_X=\operatorname{closure}\left(A_X\right) = \left(A_X^*\right)^* = {P}_X^*$.
\end{proof}

\begin{rmk}
	A rich connection to classical real analysis comes from the fact that the dual to nonnegative polynomials on a subset $K$ of $\mathbb{R}^n$ are linear functionals that can be written as integration with respect to a measure supported on $K$. Understanding what linear functionals come from a measure is known as a \textit{moment problem}. If we take a finite dimensional vector space of polynomials on $K$, then this is known as a \textit{truncated moment problem} \cite{MR3729411}.
	
\end{rmk}

\begin{defi}
	A section of the cone $\mathcal{S}^{n+1}_+$ with an (affine) subspace is called a \textit{spectrahedron}. If the subspace is linear then the section is a \textit{spectrahedral cone}. 
\end{defi}

\begin{exer}\label{exer:eray}
Show that a matrix $M$ spans an extreme ray of the cone $\mathcal{S}^{n+1}_+$ of positive semidefinite matrices if and only if $M$ is rank $1$ and positive semidefinite. Give an example of a section of $\mathcal{S}^3_+$ with a linear subspace such that the resulting spectrahedral cone has extreme rays that are not of rank $1$.
\end{exer}

We can think of a linear functional $\ell \in R_2^*$ as a quadratic form on $R_1$, or equivalently a $(n+1)\times(n+1)$ symmetric matrix:

\begin{defi}
	To any $\ell \in R_2^*$ we can associate a quadratic form
	\[ Q_\ell : R_1 \rightarrow \mathbb{R} \quad Q_\ell\left(f\right) = \ell\left(f^2\right). \]
	More formally the above defines an injective linear map $\varphi_X:R_2^* \rightarrow \operatorname{Sym}^2 R_1^*,$ where $ \operatorname{Sym}^2 R_1^*$ is the vector space of quadratic forms on $R_1$, via $\varphi_X(\ell)=Q_\ell$. By picking a basis of $R_1$ we can think of $ \operatorname{Sym}^2 R_1^*$ as the vector space of $(n+1)\times (n+1)$ symmetric matrices.
\end{defi}

The following then holds by definition of $\Sigma_X^*$: \[ \Sigma_X^* = \left\{ \ell \in R_2 \, \middle| \, Q_\ell \text{ is positive semidefinite} \right\}. \]
From this we see that $\Sigma_X^*$ is a spectrahedral cone. Namely, $\operatorname{Sym}^2 R_1^*$ contains the cone $\mathcal{S}^{n+1}_+$ of positive semidefinite quadratic forms, and $\Sigma_X^*$ is the intersection of $\mathcal{S}^{n+1}_+$ with $\varphi_X(R_2^*)$. We call the dual cone $\Sigma_X^*$ the \textit{Hankel spectrahedron} of $X$.

For a point evaluation $\ell_v$ we get
\[ Q_{\ell_v}\left(f\right) = \ell_v\left(f^2\right) = f^2\left(v\right) = \left(f\left(v\right)\right)^2. \]
Therefore we see that the resulting quadratic form has rank $1$, as it is the square of a linear functional.

\begin{exer}
Suppose that for some $\ell \in R_2^*$ quadratic form $Q_{\ell}$ has rank $1$. Show that then $\ell$ is a point evaluation functional: $\ell=\pm \ell_v$ for some $v \in X$.
\end{exer}

The above discussion gives a very nice geometric interpretation of the dual cones $\Sigma_X^*$ and $P_X^*$: as we observed the cone $\Sigma_X^*$ is the intersection of the cone of positive semidefinite matrices $\mathcal{S}^{n+1}_+$ with the linear subspace $\varphi_X(R_2^*)$. The dual cone $P^*_X$ is convex hull of positive semidefinite rank $1$ matrices that are in $\varphi_X(R_2^*)$. 

\begin{ex}
Let $X=\nu_d(\mathbb{P}^1)$. Then we can identify $R(X)_1$ with degree $d$ bivariate forms and $R(X)_2$ with degree $2d$ bivariate forms. We consider the vector spaces $R_1$ and $R_2$ with respect to the usual monomial bases consisting of monomials $t^d$, $st^{d-1}, \dots, s^d$ and  $t^{2d}$, $st^{2d-1}, \dots, s^{2d}$. Consider the map $\varphi_X$ associating to a linear functional $\ell \in R_2^*$ the $(d+1)\times (d+1)$ symmetric matrix $M_\ell$ of the quadratic form $Q_\ell$. The $(i,j)$-entry of $M_\ell$ is equal to $\ell(s^it^{d-i} \cdot s^jt^{d-j})=\ell(s^{i+j}t^{2d-i-j})$. We see that the $(i,j)$-entry of $Q_\ell$ depends only on the sum $i+j$, and therefore $M_\ell$ is constant along antidiagonals. Such a matrix is called a \textit{Hankel matrix}. It is not hard to show that $\varphi(R_2^*)$ is the full subspace of $(d+1)\times (d+1)$ Hankel matrices. 

Therefore the cone $\Sigma_{\nu_d(\mathbb{P}^1)}^*$ is the cone of $(d+1)\times (d+1)$ positive semidefinite Hankel matrices. Since $\Sigma_{\nu_d(\mathbb{P}^1)}=P_{\nu_d(\mathbb{P}^1)}$ we see that the extreme rays of $\Sigma_{\nu_d(\mathbb{P}^2)}^*$ are precisely rank $1$ positive semidefinite Hankel matrices. This fact can be used to solve the Hamburger moment problem in real analysis (and its truncated version)  \cite{MR3729411}.
\end{ex}

The above reasoning leads immediately to the following Proposition:

\begin{prop}
We have equality $P_X=\Sigma_X$ if and only if the dual cone $\Sigma_X^*$ only has rank $1$ extreme rays.
\end{prop}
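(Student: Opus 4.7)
The plan is to pass to dual cones and invoke the characterization of the rank one elements of $\Sigma_X^*$ as point evaluations that was isolated in the exercise just above the proposition. First I would set up the duality dictionary: the inclusion $\Sigma_X \subseteq P_X$ gives $P_X^* \subseteq \Sigma_X^*$, and since both cones are closed by the initial exercise of Lecture 2, biduality ensures that $P_X = \Sigma_X$ if and only if $P_X^* = \Sigma_X^*$. So the entire proposition reduces to showing that the only missing inclusion $\Sigma_X^* \subseteq P_X^*$ is equivalent to the rank one extreme ray condition.

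Next I would nail down what the rank one elements of $\Sigma_X^*$ are. By the computation in the text, each point evaluation $\ell_v$ satisfies $Q_{\ell_v}(f) = f(v)^2$, which is positive semidefinite of rank one; so $\ell_v \in \Sigma_X^*$ and $\varphi_X(\ell_v)$ has rank one. Conversely, the exercise preceding the proposition says that if $Q_\ell$ has rank one then $\ell = \pm \ell_v$ for some $v \in X$, and since $\Sigma_X^*$ sits inside the PSD cone via $\varphi_X$, only the sign giving a PSD matrix survives. Thus the rank one elements of $\Sigma_X^*$ are precisely the point evaluations $\ell_v$, which are exactly the extreme rays of $P_X^*$ already identified as the conical hull $A_X$.

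For the forward direction, if $P_X = \Sigma_X$ then $\Sigma_X^* = P_X^*$, so every extreme ray of $\Sigma_X^*$ is an extreme ray of $P_X^* = A_X$, and hence is some $\ell_v$, which has rank one by the calculation above. For the converse, assume every extreme ray of $\Sigma_X^*$ is of rank one; by the previous paragraph each such extreme ray is a point evaluation and hence lies in $P_X^*$. Since $\Sigma_X$ is full dimensional and pointed (first exercise of Lecture 2), its dual $\Sigma_X^*$ is a closed, pointed, full dimensional convex cone in the finite dimensional space $R_2^*$. Minkowski's theorem then writes $\Sigma_X^*$ as the closure of the conical hull of its extreme rays, all of which lie in the closed cone $P_X^*$, giving $\Sigma_X^* \subseteq P_X^*$ and therefore $\Sigma_X^* = P_X^*$; biduality yields $P_X = \Sigma_X$.

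I do not expect a serious obstacle here: the content is really a translation exercise between the primal description of equality and the rank structure of extreme rays of the Hankel spectrahedron. The one point that deserves a line of justification is the appeal to Minkowski's theorem, which requires $\Sigma_X^*$ to be closed and pointed; both properties are direct consequences of $\Sigma_X$ being closed and full dimensional, already recorded earlier in the lecture notes.
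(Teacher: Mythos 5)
Your proof is correct and takes essentially the same route the paper implies when it says the proposition follows ``immediately'' from the preceding discussion: pass to duals, use biduality (valid since both cones are closed), identify rank-one PSD elements of $\Sigma_X^*$ with point evaluations $\ell_v$ via the exercise, note that $P_X^* = A_X$ is generated by exactly these, and apply Minkowski's theorem for the converse inclusion. The details you supply — pointedness and full-dimensionality of $\Sigma_X^*$ to justify Minkowski, and the sign argument selecting $+\ell_v$ over $-\ell_v$ — are the right ones and are consistent with the facts the lecture has already recorded.
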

Extreme rays of the cone $\mathcal{S}^{n+1}_+$ are just positive semidefinite quadratic forms of rank 1 (see Exercise \ref{exer:eray}). Therefore we can interpret the above proposition as follows: we have equality between $\Sigma_X$ and $P_X$ if and only if all extreme rays of the slice $\Sigma_X^*=\mathcal{S}^{n+1}_+\cap \varphi_X(R_2^*)$ are extreme rays of the original cone $\mathcal{S}^{n+1}_+$.

When the cones $P_X$ and $\Sigma_X$ are not equal we can examine the ranks of extreme rays of $\Sigma_X^*$ and this can give us some picture of the difference between nonnegative polynomials and sums of squares. A complex projective variety is called a variety of \textit{almost minimal degree} if it satisfies $\deg X=\operatorname{codim} X+2$. A variety is called \textit{arithmetically Cohen-Macaulay} if its homogeneous coordinate ring is Cohen-Macaulay (see \cite{MR1322960} for more on Cohen-Macaulay rings). The following was shown in \cite{MR3486176}:

\begin{thm}\label{thm:hiamd}
Let $X\subset \mathbb{P}^n$ be a totally real, arithmetically Cohen-Macaulay, irreducible variety of almost minimal degree. Then $\Sigma_X^*$ has extreme rays of rank $1$ and $\operatorname{codim} X$ only.
\end{thm}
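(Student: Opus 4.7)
The plan is to analyze extreme rays of $\Sigma_X^*$ via its spectrahedral description. Identifying $\varphi_X(R_2^*)$ with $I(X)_2^\perp$ inside $\operatorname{Sym}^2 R_1^* \cong \mathcal{S}^{n+1}$ gives $\Sigma_X^* = \mathcal{S}_+^{n+1} \cap I(X)_2^\perp$. For an extreme ray $\ell$ with $\operatorname{rank} Q_\ell = r$, set $V := \ker Q_\ell \subset R_1$ and let $\bar I \subset \operatorname{Sym}^2(R_1/V)$ be the image of $I(X)_2$ under the surjection $\operatorname{Sym}^2 R_1 \twoheadrightarrow \operatorname{Sym}^2(R_1/V)$, whose kernel is $V \cdot R_1$. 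The face of $\Sigma_X^*$ in whose relative interior $\ell$ lies is the spectrahedron $\mathcal{S}_+^r \cap \bar I^\perp \subset \mathcal{S}^r$, and since $Q_\ell$ descends to a positive definite form on the $r$-dimensional quotient $R_1/V$ it is interior to $\mathcal{S}_+^r$. Hence $\ell$ spans an extreme ray precisely when $\bar I^\perp$ is one-dimensional, equivalently $\dim \bar I = \binom{r+1}{2} - 1$.

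The rank-$1$ case is immediate: for $v \in X(\mathbb{R})$ and $V = \{\ell \in R_1 : \ell(v) = 0\}$, one has $V \cdot R_1 = \{q \in \operatorname{Sym}^2 R_1 : q(v) = 0\}$, which contains $I(X)_2$ since $v \in X$; thus $\bar I = 0$ and the resulting extreme ray is the point evaluation $\ell_v$. Conversely every rank-$1$ PSD element of $\varphi_X(R_2^*)$ arises this way.

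Next, write $c = \operatorname{codim} X$. For higher rank I would first compute $\dim I(X)_2$ using the two structural hypotheses. The ACM property ensures that the Hilbert function is preserved under a sequence of $\dim X$ general hyperplane sections, reducing $X$ to a zero-dimensional scheme $Z$ of degree $c+2$ in general linear position in $\mathbb{P}^c$; a direct count then yields $\dim I(Z)_2 = \binom{c+2}{2} - (c+2) = \binom{c+1}{2} - 1$, one below the Castelnuovo bound attained in the minimal-degree case. Combined with the extreme-ray criterion this forces $\binom{r+1}{2} - 1 \leq \binom{c+1}{2} - 1$, and hence $r \leq c$.

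The main obstacle is ruling out the intermediate ranks $1 < r < c$. The plan is to show that under the strict inequality $\dim \bar I = \binom{r+1}{2} - 1 < \dim I(X)_2$, the intersection $I(X)_2 \cap V \cdot R_1$ is so large that $V$ itself must contain some hyperplane $\{\ell : \ell(v) = 0\}$ for a real point $v \in X(\mathbb{R})$; this collapses the putative extreme ray onto the rank-$1$ face $\ell_v$ and contradicts $r > 1$. The rigidity that makes this work is the linearity of the first syzygies of $I(X)$, a structural feature of ACM almost-minimal-degree varieties in the spirit of the Fr\"oberg-type and Eisenbud--Goto characterizations cited in the text, which constrains the possible ways $\bar I$ can drop in codimension. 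Finally, I would exhibit rank-$c$ extreme rays by choosing a codimension-$c$ subspace $V \subset R_1$ for which $\bar I^\perp$ contains a positive definite form of full rank $c$ on $R_1/V$; such $V$ can be produced by perturbing, off of a general real point $v$, the minimal-degree inner projection $X_v$ furnished by Theorem \ref{thm:proj}.
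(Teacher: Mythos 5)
Your setup is correct: identifying $\Sigma_X^*$ with $\mathcal{S}_+^{n+1}\cap I(X)_2^\perp$, the passage to the quotient $\operatorname{Sym}^2(R_1/V)$, and the criterion ``$\ell$ spans an extreme ray iff $\dim \bar I = \binom{r+1}{2}-1$'' is exactly Lemma~\ref{lem:exray} in disguise, and your rank-$1$ analysis is fine. Your derivation of $r\le\operatorname{codim}X$ from $\dim\bar I\le \dim I(X)_2 \le \binom{\operatorname{codim}X+1}{2}-1$ (Castelnuovo with strict inequality, since $X$ is not of minimal degree) is a genuinely different, and slightly more elementary, route to the upper bound than the paper's, which instead deduces $\dim V\ge\dim X+1$ from base-point-freeness of $V$ (Proposition~\ref{prop:ker}(1)) and does not need Castelnuovo at all.

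The genuine gap is the exclusion of intermediate ranks $1<r<\operatorname{codim}X$, which you label ``the main obstacle'' and then only sketch. The mechanism you propose --- that $I(X)_2\cap V\cdot R_1$ being large should force $V\supseteq\{\ell : \ell(v)=0\}$ for some real $v\in X$ --- goes in the wrong direction: that containment would force $\dim V\ge n$, i.e.\ $r\le 1$, and there is no reason an extreme ray of intermediate rank should have such a large kernel. What the rigidity of linear syzygies actually does is bound $\dim V$ from \emph{above}, not force it to be large. Concretely, the argument the paper relies on is: by Proposition~\ref{prop:ker}, the kernel $W=\ker Q_\ell$ of an extreme ray with $\operatorname{rank}Q_\ell>1$ is base-point-free and $\langle W\rangle_2$ has codimension exactly $1$ in $R_2$, so in particular $\langle W\rangle_2\ne R_2$. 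Theorem~\ref{thm:n2p} then says that any base-point-free $W$ with $\langle W\rangle_2\ne R_2$ must satisfy $\dim W\le \dim R_1 - p$, where $p$ is the Green--Lazarsfeld index. Finally, the structural input you gesture at is made precise by the cited result of~\cite{MR2946932}: for an arithmetically Cohen--Macaulay variety of almost minimal degree, $p=\operatorname{codim}X-1$. Combining these forces $\operatorname{rank}Q_\ell = \dim R_1 - \dim W \ge \operatorname{codim}X$. Your proposal names the right ingredient (linearity of syzygies) but never makes the bridge to base-point-free linear series via Theorem~\ref{thm:n2p}, and the proposed contradiction via a point-hyperplane inside $V$ does not close the argument.

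A smaller remark: you do not need to construct rank-$\operatorname{codim}X$ extreme rays separately. Since $X$ is not of minimal degree, $\Sigma_X\subsetneq P_X$ and hence $P_X^*\subsetneq\Sigma_X^*$; as $P_X^*$ already contains all rank-$1$ PSD elements of $\Sigma_X^*$, there must be an extreme ray of rank greater than $1$, and by the dichotomy just established it has rank exactly $\operatorname{codim}X$.
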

In particular this theorem applies to $X=\nu_3(\mathbb{P}^2)$ and $X=\nu_2(\mathbb{P}^3)$ and thus generalizes the result of \cite{MR2904568} where this interesting behavior of extreme rays of $\Sigma_X^*$ was observed for $\Sigma^*_{3,6}$ and $\Sigma^*_{4,4}$. Some interesting consequences for the \textit{algebraic boundary} of the cones $\Sigma_{3,6}$ and $\Sigma_{4,4}$ were found in \cite{MR2999301}.

In \cite{MR3550352} a detailed study of extreme rays of Hankel spectrehedra for ternary forms was undertaken. Even then, the possible ranks of extreme rays of $\Sigma_{3,2d}^*$ are not known for $d\geq 8$. As we will see in the next section the question of ranks of extreme rays of $\Sigma_X^*$ is also hard for monomial ideals.
\begin{problem}[Open Question]
	Let $X$ be a totally real projective variety. What are the ranks of extreme rays of the Hankel spectrahedron of $X$? What can be said for the case where $X$ is the Veronese embedding of $\mathbb{P}^n$?
\end{problem}

We call the lowest rank of extreme ray of $\Sigma_X^*$ that is strictly greater than $1$ the \textit{Hankel index} of $X$. In \cite{MR3633773} a very intriguing connection between the Hankel index and the free resolution of the ideal $I(X)$ was found (recall discussion of free resolutions in Lecture \ref{sec1}):

\begin{thm}\label{thm:hindex}
Let $X$ be a totally real projective variety with Green-Lazarsfeld index $p$. Then the Hankel index of $X$ is at least $p+1$.
\end{thm}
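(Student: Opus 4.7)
The plan is to translate the existence of a low-rank extreme ray of $\Sigma_X^*$ into a graded Artinian Gorenstein quotient of $R$, and then use the Green--Lazarsfeld hypothesis to obstruct this quotient's free resolution. Suppose $\ell \in \Sigma_X^*$ spans an extreme ray with $r := \operatorname{rank} Q_\ell > 1$; the goal is $r \geq p+1$. Set $K := \ker Q_\ell \subseteq R_1$, so $\dim K = n+1-r$. By polarization $\ell$ annihilates $K\cdot R_1 \subseteq R_2$. The face of $\mathcal{S}^{n+1}_+$ consisting of forms with kernel containing $K$ has linear span $\operatorname{Sym}^2(R_1/K)^*$, and intersecting with $\varphi_X(R_2^*)$ gives $(R_2/(K\cdot R_1))^*$. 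Extremality of the ray $\mathbb{R}_{\geq 0}\ell$ forces this space to be one-dimensional, so $\dim R_2/(K\cdot R_1) = 1$.

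Next I would build the associated Gorenstein quotient. Let $V = R_1/K$ (of dimension $r$) and form
\[ A(\ell) := \mathbb{R} \oplus V \oplus \mathbb{R}, \]
with multiplication $V \times V \to A(\ell)_2 = \mathbb{R}$ given by $Q_\ell$. Since $Q_\ell$ is positive definite (in particular non-degenerate) on $V$, this pairing is perfect, so $A(\ell)$ is Artinian Gorenstein with socle in degree $2$ and Hilbert function $(1,r,1)$. The previous paragraph shows the map $\pi : R \twoheadrightarrow A(\ell)$ equal to the identity in degree $0$, the quotient $R_1 \to V$ in degree $1$, $\ell$ in degree $2$, and zero in higher degrees, is a well-defined graded surjection of $\mathbb{R}$-algebras. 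The minimal free resolution of $A(\ell)$ over $S = \mathbb{R}[x_0, \dots , x_n]$ satisfies the Gorenstein self-duality $\beta^S_{i,j}(A(\ell)) = \beta^S_{n+1-i,\,n+3-j}(A(\ell))$, and a direct computation on this specific Hilbert function shows that its linear strand has length exactly $r$: the first non-linear entry is $\beta^S_{r,\,r+2}(A(\ell)) \neq 0$, a Koszul echo of the degree-$2$ socle.

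The last move is to push this non-linear syzygy back to $R$. From the short exact sequence $0 \to \ker\pi \to R \to A(\ell) \to 0$ and the long exact sequence of $\operatorname{Tor}^S_\bullet(-,\mathbb{R})$, the non-zero class in $\operatorname{Tor}^S_r(A(\ell),\mathbb{R})_{r+2}$ either lifts to a non-zero class in $\operatorname{Tor}^S_r(R,\mathbb{R})_{r+2}$, or is hit from $\operatorname{Tor}^S_{r-1}(\ker\pi,\mathbb{R})_{r+2}$. The generators of $\ker\pi$ sit in degree $1$ (namely $K$), degree $2$ (a codimension-$1$ hyperplane of $R_2$ by Step~1), and higher degrees (killing $R_{\geq 3}$), and a degree count in bidegree $(r-1, r+2)$ rules out the latter possibility. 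Thus $\beta^S_{r,\,r+2}(R) \neq 0$; but this contradicts Green--Lazarsfeld index $\geq p$ the moment $r \leq p$. Hence every extreme ray of rank $>1$ has rank $\geq p+1$, giving the claimed bound on the Hankel index.

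The main obstacle is the last step: verifying rigorously that the Koszul class survives the connecting homomorphism into $\operatorname{Tor}^S_\bullet(R,\mathbb{R})$ in the stated bidegree. This reduces to controlling the low-degree Betti numbers of $\ker\pi$, which I expect to follow from the Koszul complex of a basis of $K$ coupled with the extremality identity $\dim R_2/(K\cdot R_1)=1$; pinning this down cleanly, without spurious cancellations in the spectral sequence, is the delicate part and is where careful bookkeeping of Koszul cohomology of the triple $(R, A(\ell), \ker\pi)$ will be required.
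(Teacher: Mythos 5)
Your construction of the Artinian Gorenstein quotient $A(\ell) = \mathbb{R} \oplus V \oplus \mathbb{R}$ is correct, as is the extremality calculation showing $\dim R_2/(K\cdot R_1) = 1$ (this recovers Proposition~\ref{prop:ker}(2)); and the Betti table of $A(\ell)$ over $S$ does indeed have its first strictly sub-linear entry at $\beta^S_{r,\,r+2}$. But your route is genuinely different from the paper's, and the ``last move'' is not a bookkeeping issue --- it is the entire content of the theorem, and as sketched it does not close.

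Concretely: to get $\beta^S_{r,\,r+2}(R)\neq 0$ from the short exact sequence $0\to\ker\pi\to R\to A(\ell)\to 0$ you need the connecting map $\operatorname{Tor}^S_r(A(\ell),\mathbb{R})_{r+2}\to\operatorname{Tor}^S_{r-1}(\ker\pi,\mathbb{R})_{r+2}$ to have nonzero kernel, for which the only easy sufficient condition is that the target vanishes. Your stated ``degree count'' does not deliver this. First, your description of the generators of $\ker\pi$ is slightly off: since $K\cdot R_1 \subseteq \ker\ell$ and both have codimension one in $R_2$, they coincide, so $\ker\pi$ has \emph{no} minimal generators in degree $2$. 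But it typically \emph{does} have minimal generators in degree $3$ and beyond: one needs $R_3 = \langle K\rangle_3 = K\cdot R_2$, which is not automatic --- it holds eventually in high degree precisely when $K$ is basepoint-free on $X$, and even then not at a controlled degree without further input. Those high-degree generators, together with the unknown syzygies among the elements of $K$ inside $R$ (which reflect $I(X)$ itself), can populate $\operatorname{Tor}^S_{r-1}(\ker\pi,\mathbb{R})_{r+2}$, and there is no a priori reason the Koszul class from $A(\ell)$ avoids them.

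This is exactly the gap that the paper fills with two ingredients you don't invoke: Proposition~\ref{prop:ker}(1), which shows the kernel $W=K$ is a \emph{basepoint-free} linear series (a nontrivial fact using extremality together with total reality of $X$), and Theorem~\ref{thm:n2p} (from \cite{MR2230917}), which says that when the Green--Lazarsfeld index is $p$, any basepoint-free $W$ of small codimension already satisfies $\langle W\rangle_2 = R_2$. Your outline is, in effect, an attempt to reprove Theorem~\ref{thm:n2p} from the Tor long exact sequence alone, without basepoint-freeness; that theorem is genuinely nontrivial (it is a syzygy-restriction result relying on Koszul cohomology of the embedding), and I don't see a way to recover it by the spectral-sequence bookkeeping you propose without importing that geometric input. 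So: the reduction to a statement about $\beta^S_{r,\,r+2}(R)$ is a nice and correct reformulation of what must be shown, but the proof of that statement is missing, and the missing piece is the heart of the matter.
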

\begin{rmk}
Theorem \ref{thm:hiamd} can be deduced from Theorem \ref{thm:hindex}, since it is known that arithmetically Cohen-Macaulay varieties of almost minimal degree have Green-Lazarsfeld index $\operatorname{codim} X-1$ \cite{MR2946932}. Therefore we know that the Hankel index is at least $\operatorname{codim} X$. As we will explain in the next section (see Proposition \ref{prop:ker}) we know that for any variety that is not of minimal degree the Hankel index is at most $\operatorname{codim} X$ and the result follows.
\end{rmk}

\subsubsection{Under the hood.} We briefly discuss some details of the proofs of the results on extreme rays of $\Sigma_X^*$. The following general Lemma whose proof is left as an exercise is quite useful for this investigation.
\begin{lem}\label{lem:exray}\cite{MR1342934}
Let $\mathcal{S}^n_+$ be the cone of $n \times n$ positive semidefinite matrices and $C=\mathcal{S}^n_+\cap L$ be a spectrahedral cone. Then $M\in C$ spans an extreme ray of $C$ if and only if the kernel of $M$ is maximal over all matrices in $L$: if $\ker M \subseteq \ker A$ for some $A \in L$ then $A=\lambda M$ for some $\lambda \in \mathbb{R}$. 
\end{lem}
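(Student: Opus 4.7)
The plan is to prove both directions from one elementary fact about decompositions of positive semidefinite matrices: if $B_1, B_2 \in \mathcal{S}^n_+$ and $B_1 + B_2 = M$, then $\ker B_i \supseteq \ker M$ for $i=1,2$. Indeed, for $v \in \ker M$,
$$0 = v^T M v = v^T B_1 v + v^T B_2 v,$$
and both summands are nonnegative, so $v^T B_i v = 0$, which forces $B_i v = 0$ since $B_i \succeq 0$.

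For the ``if'' direction, I would suppose that every $A \in L$ with $\ker A \supseteq \ker M$ is a scalar multiple of $M$, and write $M = B_1 + B_2$ with $B_1, B_2 \in C$. The observation above gives $\ker B_i \supseteq \ker M$, and since $B_i \in L$, the hypothesis forces $B_i = \lambda_i M$. Positive semidefiniteness of $B_i$ (for $M \neq 0$, the only nontrivial case) forces $\lambda_i \geq 0$, so $M$ spans an extreme ray of $C$.

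For the ``only if'' direction, I would assume $M$ is extreme and take $A \in L$ with $\ker A \supseteq \ker M$, aiming to show $A \in \mathbb{R} M$. Set $V := (\ker M)^\perp$, so that $M$ and $A$ both vanish on $\ker M$ and restrict to symmetric operators on $V$, with $M|_V$ strictly positive definite. Choose $\epsilon > 0$ small enough that $M|_V \pm \epsilon A|_V \succeq 0$; then $M \pm \epsilon A$ is PSD, and since $L$ is a linear subspace containing $M$ and $A$, both $M \pm \epsilon A$ lie in $C$. The decomposition
$$M = \tfrac{1}{2}(M + \epsilon A) + \tfrac{1}{2}(M - \epsilon A)$$
combined with extremality of the ray $\mathbb{R}_{\geq 0} M$ forces $M + \epsilon A = \mu_1 M$ and $M - \epsilon A = \mu_2 M$ for some scalars $\mu_1, \mu_2 \geq 0$; subtracting exhibits $A$ as a scalar multiple of $M$.

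The main technical point is the choice of $\epsilon$ in the perturbation step; this is handled by the strict positive definiteness of $M|_V$, which absorbs any sufficiently small symmetric perturbation that also vanishes on $\ker M$---and it is precisely the hypothesis $\ker A \supseteq \ker M$ that guarantees $A$ restricts to $V$ in this way. Everything else is bookkeeping, reflecting the classical fact that the minimal face of $\mathcal{S}^n_+$ containing $M$ in its relative interior is $\{B \in \mathcal{S}^n_+ : \ker B \supseteq \ker M\}$, so that the minimal face of $C$ through $M$ is the slice of $L$ with kernel containing $\ker M$.
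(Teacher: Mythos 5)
Your proof is correct. The paper explicitly leaves this lemma as an exercise (citing Ramana--Goldman), so there is no in-text argument to compare against, but what you have written is the standard proof and it is complete: the ``if'' direction uses the kernel-monotonicity of PSD decompositions ($B_1+B_2=M$ with $B_i\succeq 0$ forces $\ker B_i\supseteq\ker M$), and the ``only if'' direction uses the perturbation $M\pm\epsilon A$, which stays PSD because both $M$ and $A$ are block-supported on $(\ker M)^\perp$ where $M$ is strictly positive definite, and stays in $L$ because $L$ is a linear subspace. The one point worth making explicit in a write-up is the small linear-algebra claim you use implicitly in the perturbation step: a symmetric matrix $B$ that annihilates a subspace $W$ preserves $W^\perp$ (since $u^T B w=(Bu)^T w=0$ for $u\in W$), and hence is PSD iff $B|_{W^\perp}$ is; this is exactly what lets you pass from $M|_V\pm\epsilon A|_V\succeq 0$ to $M\pm\epsilon A\succeq 0$. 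Your closing remark is also accurate: the argument is just the standard facial description of $\mathcal{S}^n_+$, namely that the minimal face of $\mathcal{S}^n_+$ containing $M$ is $\{B\succeq 0:\ker B\supseteq\ker M\}$, intersected with $L$.
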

We say that a subspace $W$ of $R_1$ is a \textit{basepoint-free linear series} on $X$ if the linear subspace of $\mathbb{C}\mathbb{P}^n$ defined by the vanishing of all linear forms in $W$ does not intersect the complex variety $X_{\mathbb{C}}$. The characterization of Lemma \ref{lem:exray} can be used to derive the following Proposition:
\begin{prop}\label{prop:ker}\cite{MR3633773}
Suppose a linear functional $\ell$ spans an extreme ray of $\Sigma_X^*$ and $\operatorname{rank} Q_\ell >1$. Let $W$ be the kernel of $Q_\ell$.
Then the following hold:
\begin{enumerate}
\item $W$ is a basepoint-free linear series on $X$.
\item Consider the ideal $\langle W\rangle$ generated by $W$ in $R$. Then $\langle W\rangle_2$ is a codimension $1$ subspace of $R_2$.
\end{enumerate}


\end{prop}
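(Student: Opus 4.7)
The plan is to apply Lemma~\ref{lem:exray} to both parts. Throughout set $M := Q_\ell$ and $W := \ker M \subseteq R_1$; by extremality and Lemma~\ref{lem:exray}, any $A \in L := \varphi_X(R_2^*)$ satisfying $\ker A \supseteq W$ must be of the form $A = \lambda M$. Combined with injectivity of $\varphi_X$ (which holds because $R_1 \cdot R_1 = R_2$), this says that any $\ell' \in R_2^*$ with $\ker Q_{\ell'} \supseteq W$ is a scalar multiple of $\ell$.

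For part (1), suppose for contradiction that $W$ has a basepoint $v \in X_{\mathbb{C}}$. If $v \in X(\mathbb{R})$, then $\ell_v \in R_2^*$ yields $Q_{\ell_v} \in L$ of rank $1$, with $\ker Q_{\ell_v} = \{f \in R_1 : f(v) = 0\} \supseteq W$. Lemma~\ref{lem:exray} forces $Q_{\ell_v} = \lambda M$, contradicting $\operatorname{rank} M > 1$. If instead $v = a + ib$ with $v \notin X(\mathbb{R})$, then $\bar v \in X_{\mathbb{C}}$ is also a basepoint (since $W$ is real and $X$ is defined over $\mathbb{R}$), and $[v] \neq [\bar v]$ forces $a, b \in \mathbb{R}^{n+1}$ to be linearly independent. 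Consider the real symmetric matrix
\[ N := vv^T + \bar v \bar v^T = 2(aa^T - bb^T). \]
For any $g \in I(X)_2$ with symmetric matrix $A_g$ we have $\operatorname{tr}(A_g N) = g(v) + g(\bar v) = 0$, so $N \in L$; and for each $f \in W$, $Nf = v\,f(v) + \bar v\,f(\bar v) = 0$, so $W \subseteq \ker N$. Lemma~\ref{lem:exray} then yields $N = \lambda M$. However, $N$ is indefinite: taking $x$ to be the component of $b$ orthogonal to $a$ (nonzero by linear independence) gives $x^T N x = -2(b \cdot x)^2 < 0$, while $a^T N a = 2(|a|^4 - (a \cdot b)^2) > 0$ by the strict Cauchy--Schwarz inequality. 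This is incompatible with $M$ being PSD, a contradiction.

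For part (2), a functional $\ell' \in R_2^*$ lies in the annihilator $\langle W \rangle_2^\perp \subseteq R_2^*$ if and only if $\ell'(fg) = 0$ for all $f \in W$ and $g \in R_1$, which by polarization is precisely the condition $W \subseteq \ker Q_{\ell'}$. By the consequence of Lemma~\ref{lem:exray} recorded in the first paragraph, every such $\ell'$ is a scalar multiple of $\ell$. Hence $\langle W \rangle_2^\perp = \mathbb{R}\,\ell$ is one-dimensional, and $\langle W \rangle_2$ has codimension $1$ in $R_2$.

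The crux of the argument is the complex-basepoint case of~(1): one must construct a real perturbation of $M$ from a non-real point $v$ that simultaneously lies in the cut-out subspace $L$ and is incompatible with PSD-ness so as to contradict $N = \lambda M$. The symmetrization $vv^T + \bar v \bar v^T$ is essentially forced, and its membership in $L$ comes for free from $g(v) = g(\bar v) = 0$; the delicate observation is that this matrix is inevitably indefinite whenever $[v] \neq [\bar v]$, which is exactly what powers the contradiction. Part~(2) is then a clean consequence of the same maximal-kernel characterization, with $W \subseteq \ker Q_{\ell'}$ replacing the basepoint condition.
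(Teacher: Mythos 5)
Your proof takes exactly the route the lecture notes suggest: derive both parts from the maximal-kernel characterization of extreme rays in Lemma~\ref{lem:exray}, using the real point evaluation $\ell_v$ (for a real basepoint), the symmetrization $vv^T+\bar v\bar v^T$ (for a non-real basepoint), and the annihilator computation for part~(2). The argument is correct except for one small slip in the non-real case: the claim that $a^T N a = 2(|a|^4-(a\cdot b)^2) > 0$ does not follow from Cauchy--Schwarz, since Cauchy--Schwarz gives $(a\cdot b)^2 < |a|^2|b|^2$, which only bounds $(a\cdot b)^2$ by $|a|^4$ when $|a|\geq |b|$. The conclusion that $N$ is indefinite is still correct and is easily repaired: instead of using $a$ itself, take $y$ in the orthogonal complement of $b$ with $a\cdot y\neq 0$ (such $y$ exists because $a,b$ are linearly independent), so that $y^T N y = 2(a\cdot y)^2>0$, which together with your $x^T N x<0$ shows indefiniteness; alternatively, observe that $N$ has rank exactly $2$ and restricts on $\operatorname{span}(a,b)$ to a difference of two independent rank-one forms, hence has signature $(1,1)$. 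Everything else—membership of $N$ in $L=\varphi_X(R_2^*)$ via $\operatorname{tr}(A_gN)=g(v)+g(\bar v)=0$, the inclusion $W\subseteq\ker N$, the reduction of the real-basepoint case to a rank contradiction, and the identification $\langle W\rangle_2^\perp=\{\ell':W\subseteq\ker Q_{\ell'}\}=\mathbb{R}\ell$—is sound.
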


We observe that from part (1) of the above Proposition it follows that the dimension of the kernel of any extreme ray of $\Sigma_X^*$ is at least $\dim X+1$. In particular, the Hankel index of $X$ is at most $\operatorname{codim} X$.

 We now use a result of \cite{MR2230917} to connect the Hankel index to free resolutions.
 
\begin{thm}\label{thm:n2p}
Suppose that the Green-Lazarsfeld index of $X$ is $p$. Then for any basepoint-free linear series $W\subseteq R_1$ such that $\dim W >\dim R_1-p$ we have $\langle W\rangle_2=R_2$.
\end{thm}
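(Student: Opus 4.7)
The plan is to translate the desired surjectivity of the multiplication map $\mu \colon W \otimes R_1 \to R_2$ into the vanishing of a Koszul-type cohomology group, and then invoke the Green-Lazarsfeld property $N_{2,p}$ to obtain that vanishing.

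First I would reformulate the goal. The cokernel of $\mu$ is exactly $R_2/\langle W\rangle_2 = (R/(W))_2$, where $(W)\subseteq R$ is the ideal generated by $W$. Picking a vector-space complement $U \subseteq R_1$ so that $R_1 = W \oplus U$ with $r := \dim U = \dim R_1 - \dim W < p$, this cokernel is a quotient of $\operatorname{Sym}^2 U$. The hypothesis that $W$ is basepoint-free on $X$ is equivalent to saying that the ideal $(W)$ is $\mathfrak{m}$-primary in $R$, so $R/(W)$ is Artinian. In particular, its degree-$2$ piece is computed as the cokernel of $W \otimes R_1 \to R_2$, which sits at the end of the graded Koszul-type complex $\bigwedge^2 W \otimes R(-2) \to W \otimes R(-1) \to R \to R/(W) \to 0$ in appropriate graded pieces.

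Next I would use the $N_{2,p}$ property, i.e.\ the vanishing of Koszul cohomology $K_{i,j}(R) = \operatorname{Tor}^S_i(R, \mathbb{R})_{i+j} = 0$ for $1 \leq i \leq p$ and $j \geq 2$, which is the standard homological translation of the Green-Lazarsfeld index of $X$ being at least $p$. The central step is a comparison between the Koszul complex on a basis of the full space $R_1$ (which computes the Tor groups above) and the Koszul complex on a basis of $W$. A spectral sequence argument, or equivalently an inductive diagram chase along a filtration by dimension of the complement $U$, shows that the difference between these two complexes is controlled by terms of the form $\bigwedge^i U \otimes K_{j,k}(R)$. The codimension bound $r < p$ is designed precisely so that each of these auxiliary Koszul pieces lies in the range where the $N_{2,p}$ hypothesis forces vanishing, from which one concludes $(R/(W))_2 = 0$ and hence $\langle W\rangle_2 = R_2$.

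The main obstacle is the comparison step in the last paragraph: one must carefully track how each additional dimension in the complement $U$ shifts the relevant bi-degree by one, so that the bound $r < p$ matches exactly the range $i \leq p$ of Tor-vanishing provided by $N_{2,p}$. The basepoint-free hypothesis is essential here, as without it the Koszul complex on $W$ fails to be acyclic in the needed degrees and the spectral sequence fails to degenerate. Packaging this bookkeeping cleanly is the technical heart of the cited theorem of \cite{MR2230917}, which isolates the statement about restricting linear syzygies to a basepoint-free subseries and applies it uniformly.
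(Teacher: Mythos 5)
Theorem~\ref{thm:n2p} is stated in the notes without proof, as a citation to \cite{MR2230917} (Eisenbud--Green--Hulek--Popescu, ``Restricting linear syzygies''), so there is no in-paper argument to compare your attempt against. Your sketch is a faithful high-level account of that reference's strategy: the reformulation $\langle W\rangle_2 = R_2 \iff (R/(W))_2 = 0$ is right; basepoint-freeness does make $(W)$ an $\mathfrak{m}$-primary ideal of $R$ by the projective Nullstellensatz, so $R/(W)$ is Artinian; the translation of the Green--Lazarsfeld index hypothesis into $K_{i,j}(R)=\operatorname{Tor}^S_i(R,\mathbb{R})_{i+j}=0$ for $1\le i\le p$, $j\ge 2$ is the standard $N_{2,p}$ condition; and the engine is indeed the comparison of the Koszul complex on $R_1 = W\oplus U$ with the Koszul complex on $W$, filtered by $\bigwedge^\bullet U$, with $r=\dim U<p$ chosen precisely so that every term of the comparison lands inside the $N_{2,p}$ vanishing range. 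You correctly flag that the spectral-sequence bookkeeping (one shift of bidegree per dimension of $U$, with basepoint-freeness supplying the needed acyclicity) is the technical core and you defer it to the cited source. Given that the notes themselves treat this theorem as a black box, this is the appropriate level of detail, but keep in mind that what you have produced is a road map to the EGHP proof rather than a self-contained derivation; the filtration argument would need to be written out for it to stand on its own.
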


Putting together Proposition \ref{prop:ker} and Theorem \ref{thm:n2p} we obtain the following:

\begin{cor}\cite{MR3633773} Let $X$ be a totally real projective variety. Then the Hankel index of $X$ is at least the Green-Lazarsfeld index of $X$ plus one. 
\end{cor}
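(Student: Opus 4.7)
The plan is to prove the corollary by a direct contrapositive argument, combining Proposition \ref{prop:ker} (which characterizes the kernel of any higher-rank extreme ray of $\Sigma_X^*$) with Theorem \ref{thm:n2p} (which forces such a kernel to be small when the Green--Lazarsfeld index is large). The overall shape of the argument is: a non-rank-$1$ extreme ray of $\Sigma_X^*$ has a kernel $W \subseteq R_1$ whose behavior is tightly constrained, and the Green--Lazarsfeld index $p$ is precisely the obstruction that keeps $\dim W$ from being too large, hence keeps $\operatorname{rank} Q_\ell$ from being too small.

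First I would fix an extreme ray $\ell$ of $\Sigma_X^*$ with $r := \operatorname{rank} Q_\ell > 1$ (if no such extreme ray exists, then $P_X = \Sigma_X$ and the statement is vacuous); by definition, the Hankel index is the infimum of such $r$, so it suffices to show $r \geq p+1$. Set $W := \ker Q_\ell \subseteq R_1$, so that $\dim W = \dim R_1 - r$. Proposition \ref{prop:ker} now gives us two crucial pieces of information: (1) $W$ is a basepoint-free linear series on $X$, and (2) $\langle W \rangle_2$ has codimension exactly $1$ in $R_2$, and in particular $\langle W \rangle_2 \neq R_2$.

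Next I would apply the contrapositive of Theorem \ref{thm:n2p}. That theorem asserts that for a basepoint-free linear series with $\dim W > \dim R_1 - p$, the ideal it generates exhausts $R_2$ in degree $2$. Since (1) gives basepoint-freeness and (2) rules out $\langle W \rangle_2 = R_2$, the hypothesis of Theorem \ref{thm:n2p} must fail, giving $\dim W \leq \dim R_1 - p$; using the strict codimension-$1$ statement in Proposition \ref{prop:ker}(2) rather than mere properness tightens this to $\dim W \leq \dim R_1 - p - 1$, which translates to $r = \dim R_1 - \dim W \geq p+1$. Taking the infimum over all higher-rank extreme rays gives $\operatorname{Hankel~index}(X) \geq p+1$, as required.

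The main obstacle is book-keeping: one must carefully align the Green--Lazarsfeld index with the correct dimension count in Theorem \ref{thm:n2p}, and must leverage the sharp codimension-$1$ conclusion in Proposition \ref{prop:ker}(2) (rather than only $\langle W\rangle_2 \subsetneq R_2$) to avoid an off-by-one and land at $p+1$ rather than $p$. Beyond that counting, the argument is a one-line contraposition, which is what makes this corollary such a clean payoff of the syzygy-theoretic viewpoint: the Green--Lazarsfeld index, an entirely algebraic invariant of the free resolution of $I(X)$, directly lower-bounds a convex-geometric invariant (the minimal non-trivial rank on the Hankel spectrahedron) governing the gap between $P_X$ and $\Sigma_X$.
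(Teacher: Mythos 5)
Your overall plan is exactly the paper's: combine Proposition \ref{prop:ker} with Theorem \ref{thm:n2p} via contraposition. The paper's own ``proof'' is literally the one sentence preceding the Corollary. You also correctly sense that there is an off-by-one to be careful about. However, your proposed fix for it does not work.

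Taking the statements exactly as printed, the argument stalls one short of the goal. With $\ell$ an extreme ray, $r=\operatorname{rank}Q_\ell>1$, and $W=\ker Q_\ell$, you have $\dim W=\dim R_1-r$, $W$ basepoint-free, and $\langle W\rangle_2\neq R_2$. The contrapositive of Theorem \ref{thm:n2p} as written ($\dim W>\dim R_1-p\Rightarrow\langle W\rangle_2=R_2$) yields only $\dim W\le\dim R_1-p$, i.e.\ $r\ge p$. Your claim that the \emph{codimension-$1$} conclusion of Proposition \ref{prop:ker}(2) ``tightens this to $\dim W\le\dim R_1-p-1$'' is not justified: Theorem \ref{thm:n2p} is a dichotomy between $\langle W\rangle_2=R_2$ and $\langle W\rangle_2\neq R_2$ and attaches no extra dimension information to knowing that the codimension of $\langle W\rangle_2$ in $R_2$ is exactly $1$. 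Indeed, if anything the intuition runs the other way: codimension exactly $1$ is the \emph{smallest} possible failure of surjectivity, so it is compatible with $W$ being as large as the dichotomy allows, and it cannot possibly force $W$ to be smaller. The codimension-$1$ statement plays no role in this Corollary; only the nonvanishing $\langle W\rangle_2\neq R_2$ is used.

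The actual resolution of the off-by-one is that Theorem \ref{thm:n2p} should be read with a non-strict inequality $\dim W\ge\dim R_1-p$ (equivalently $\operatorname{codim}_{R_1}W\le p$), matching the Eisenbud--Green--Hulek--Popescu source \cite{MR2230917}. Under that reading the contrapositive gives $\operatorname{codim}_{R_1}W\ge p+1$, hence $r\ge p+1$, directly. You can sanity-check this against the example $X=\nu_3(\mathbb{P}^2)$, where the Green--Lazarsfeld index is $\operatorname{codim}X-1=6$ and Theorem \ref{thm:hiamd} pins the Hankel index at exactly $\operatorname{codim}X=7$: the bound $r\ge p+1$ is sharp there, while $r\ge p$ would be off by one. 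So the approach is right, but the extra ``$+1$'' must come from reading Theorem \ref{thm:n2p} correctly, not from the exact codimension of $\langle W\rangle_2$.
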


Some cases where the above bound is sharp are discussed in \cite{MR3633773} and we will see an application of one such case in the next subsection. Currently the cases where the Hankel index is equal to the Green-Lazarsfeld index plus one are not well-understood. 

\begin{ex}
We revisit the smallest cases in Hilbert's theorem where nonnegative polynomials are not sums of squares. Here we have $X=\nu_3(\mathbb{P}^2)$ or $X=\nu_2(\mathbb{P}^4)$. 
By Theorem \ref{thm:hiamd} we know that in both cases $\Sigma_X^*$ has extreme rays of rank $1$ and $\operatorname{codim} X$ only. Specifically, when $X=\nu_3(\mathbb{P}^2)$ the cone $\Sigma_X^*$ has extreme rays of rank $1$ and $7$ only, and $X=\nu_2(\mathbb{P}^4)$ has extreme rays of rank $1$ and $6$ only. We know that extreme rays of rank $7$ and $6$ must exist in the cases since $\Sigma_X\subsetneq P_X$. However, this still leaves the question of how to construct such extreme rays. This is what we will address next. For more on fascinating geometry of the minimal cases in Hilbert's theorem see \cite{MR2999301}.
\end{ex}
\subsubsection{Constructing extreme rays of the dual cone $\Sigma_X^*$.} We can also see a relation between equality of $\Sigma_X$ and $P_X$ and varieties of minimal degree in a different way from Lecture 2. Suppose that $X$ is not a variety of minimal degree and consider the intersection $Y$ of $X$ with a general linear space $L$ such that $\dim L =\operatorname{codim} X$. By Bertini's theorem \cite[Theorem 17.22]{MR1416564} we know that $Y$ is the union of $\deg X$ many points. Moreover, since $\deg X>\operatorname{codim} X+1$ and $Y$ lies in $L$ we see that the points of $Y$ are not linearly independent.

A dual way of saying this is that evaluations of linear forms on the points of $Y$ are not independent. In fact there are \textit{linear relations} among values of linear forms on the points of $Y$. One can leverage such linear relations into building a linear functional $L\in R_2^*$ which separates $\Sigma_X$ from $P_X$. We will not go into all of the technical details, but will instead give an illustrative example for the case of ternary sextics, i.e. when $X=\nu_3(\mathbb{P}^2)$. A hyperplane section of $X$ corresponds to a planar cubic curve, and so a section of $X$ with a subspace $L$ of  codimension $2$ corresponds to the intersection of two planar cubics in $9$ points. We will pick the cubics so that the nine intersection points are real. The idea of using such a complete intersection to show that $\Sigma_{3,6}$ is not equal to $P_{3,6}$ goes all the way back for to Hilbert's original proof. Later Robinson used this idea to build an explicit example of a nonnegative form that is not a sum of squares \cite{MR1747589}.
\subsubsection{Ternary Sextics.}
Let $q_1$ and $q_2$ be two cubics intersecting in $9$ real points. One can take each cubic to be the union of $3$ real lines.
Consider the evaluation map:
\begin{align*}
	\mathrm{Ev} : \overbrace{\mathbb{R}\left[x\right]_3}^{\cong \mathbb{R}^{10}} &\rightarrow \mathbb{R}^9\\
	f &\mapsto \left(f\left(v_1\right), \dots , f\left(v_9\right)\right)
\end{align*}

The evaluation map is a linear map. The kernel of $\operatorname{Ev}$ consists of cubic forms vanishing on $v_1,\dots,v_9$. It is possible to show that $q_1$ and $q_2$ generate a radical ideal and therefore the dimension of the kernel of $\operatorname{Ev}$ is $2$ \cite{MR1376653}. By elementary linear algebra, the dimension of the image of $\operatorname{Ev}$  is $8$, and therefore the image of $\operatorname{Ev}$ is a hyperplane in $\mathbb{R}^9$. Therefore, there exists a linear functional on $\mathbb{R}^9$ which vanishes on the image $\operatorname{Im} \operatorname{Ev}$.

More specifically this means that there exist $\lambda_1, \dots , \lambda_9 \in \mathbb{R}$ such that $\lambda_1 f\left(v_1\right) + \dots + \lambda_9 f\left(v_9\right) = 0$ for all $f \in \mathbb{R}\left[x\right]_3$. In other words the values of cubics at these nine points are \textit{linearly dependent}. This would not happen if the points $v_1,\dots,v_9$ were generic, but they are not, they come from a complete intersection of two cubics. The linear relation on the values of cubics on $v_1,\dots,v_9$ is called \textit{Cayley-Bacharach relation}. Additionally it is possible to show that the coefficients $\lambda_1,\dots,\lambda_9$ are non-zero, i.e. the relation must involve all the $9$ points in the intersection of the cubics. For more on the history of Cayley-Bacharach theorem, relations and generalizations see \cite{MR1376653}.

We can build a linear functional $L$ in $\Sigma_X^*$ but not in $P_X^*$ by considering linear combinations of the point evaluation functionals $\ell_{v_1}, \dots , \ell_{v_9}$. We write $L := \mu_1 \ell_{v_1} + \dots + \mu_9 \ell_{v_9}$ so that the associated quadratic form is  $Q_L\left(f\right) = \mu_1 f\left(v_1\right)^2 + \dots + \mu_9 f\left(v_9\right)^2$. 

Recall that point evaluations correspond to rank $1$ quadratic forms. In this case we are adding squares of $9$ linear functionals (point evaluations). Our previous discussion shows that the span of point evaluations on ternary cubics is eight dimensional. The fact that in the unique Cayley-Bacharach relation all the coefficients are non-zero indicates that any eight of the point evaluations on cubics are linearly independent. This implies that eight of the nine coefficients $\mu_i$ must be positive and at most one of $\mu_i$ is negative, since $Q_L\left(f\right) = \mu_1 f\left(v_1\right)^2 + \dots + \mu_9 f\left(v_9\right)^2$ is nonnegative on squares of cubics. If all coefficients $\mu_i$ are positive, then $L$ lies in $P^*_X$. Therefore we want eight positive coefficients $\mu_i$ and one negative $\mu_i$. Without loss of generality we may assume that $\mu_1, \dots , \mu_8 >0 , \mu_9 < 0$.

As shown in \cite{MR2904568} given positive $\mu_1,\dots,\mu_8$ we may choose any $\mu_9<0$ such that 
$$\mu_9\geq -\frac{\lambda_9}{\sum_{i=1}^8 \frac{\lambda_i^2}{\mu_i}},$$ and the quadratic form $Q_\ell$ will be positive semidefinite. Furthermore the minimal choice of $\mu_9$, which makes the above inequality into equality will lead to a linear functional $\ell \in \Sigma_{3,6}^*$ and $\ell \notin P_{3,6}^*$. In fact, it can be shown that $\ell$ will be an extreme ray of $\Sigma_{3,6}^*$.


\subsection{Quadratic Monomial Ideals}

When the graph $G$ is not chordal one can investigate what additional conditions besides the ``obvious" condition that every filled-in principal submatrix is positive semidefinite are necessary and sufficient for finding a positive semidefinite completion. 

\begin{ex}
	 Geometrically the variety $X(C_n)$ of the $n$-cycle $C_n$ is the union of $n$ lines in $\mathbb{P}^{n-1}$.
	 On the matrix completion side this corresponds to matrices of the following form:
	 \[ \begin{pmatrix}
	 		*	& *	& 	& ?	& 	& * 	\\
	 		*	& *	& *	& 	&	& 	\\
	 			& *	& *	& 	& ? 	&	\\
	 		?	&	&	& \ddots	& 	&	\\
	 			&	& ?	&	&	&	\\
			*	&	&	&	& *	& *
	 	\end{pmatrix}
	 \]
\end{ex}
For the $n$-cycle $C_n$, ($n\geq 4$) the ``obvious" necessary conditions for PSD completability are not sufficient, however the necessary and sufficient conditions are not hard to find \cite{MR1428642}. It is natural to ask for what graphs does positive semidefiniteness of all fully specified submatrices and completability of all induced cycles suffice. This was answered in \cite{MR1342017}: the graphs for which cycle completability and the ``obvious" necessary condition are sufficient are clique sums of chordal and \textit{series parallel} graphs.  

The question of possible ranks of extreme rays of the cone $\Sigma_G^*$ has also been analyzed extensively. The largest rank of an extreme ray of $\Sigma_G^*$ was called \textit{the sparsity order} of $G$ in \cite{MR960140}.
A result of Laurent \cite{MR1863577} describes all graphs $G$ for which $\Sigma_G^*$ only has rank $1$ and $2$ extreme rays solving a conjecture of  \cite{MR960140}. 

Using Theorem \ref{thm:hindex} and a result of \cite{MR2188445} on the Green-Lazarsfeld index of quadratic square-free monomial ideals we obtain the following Theorem which finds the smallest rank of an extremal ray of $\Sigma_G$ that is greater than $1$:

\begin{thm}\cite{MR3633773}
Let $G$ be a non-chordal graph. Then the lowest rank of an extreme ray of $\Sigma_G^*$ is equal to the length of the smallest induced cycle in $G$ minus $2$.
\end{thm}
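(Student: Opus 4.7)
The plan is to bound the Hankel index of $X(G)$ from both sides by $\ell - 2$, where $\ell$ denotes the length of the smallest induced cycle in $G$; set $n = |V(G)|$.

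For the lower bound, I would combine Theorem~\ref{thm:hindex} with the computation of Green-Lazarsfeld indices of square-free quadratic monomial ideals in \cite{MR2188445}: since $I(G)$ is the edge ideal of the complement graph $\bar{G}$, the Eisenbud-Green-Hulek-Popescu criterion says that this edge ideal has property $N_{2,p}$ precisely when $G$ contains no induced cycle of length between $4$ and $p+2$. Consequently the Green-Lazarsfeld index of $I(G)$ equals $\ell - 3$, and Theorem~\ref{thm:hindex} yields that the Hankel index of $X(G)$ is at least $\ell - 2$.

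For the upper bound, I would construct an extreme ray of $\Sigma_{X(G)}^*$ of rank exactly $\ell - 2$ by localizing to an induced cycle. Fix an induced cycle $C \subseteq G$ of length $\ell$, and consider the subvariety $X(C) \subseteq X(G)$ defined by imposing $x_j = 0$ for $j \notin V(C)$: this is a union of $\ell$ lines forming a cycle in $\mathbb{P}^{\ell - 1}$, a totally real $1$-dimensional variety of codimension $\ell - 2$. Applying the same Green-Lazarsfeld lower bound to $X(C)$ itself gives that its Hankel index is at least $\ell - 2$, while the observation following Proposition~\ref{prop:ker} gives that it is at most $\operatorname{codim} X(C) = \ell - 2$; hence there exists an extreme ray $M^C \in \Sigma_{X(C)}^*$ of rank exactly $\ell - 2$.

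I would then lift $M^C$ to a symmetric $n \times n$ matrix $M$ by placing it in the $V(C) \times V(C)$ block and padding zeros elsewhere. Because $C$ is an \emph{induced} subgraph of $G$, the non-edges of $C$ within $V(C)$ coincide with the non-edges of $G$ within $V(C)$, so $M$ lies in the pattern space $\varphi_{X(G)}\bigl(R(X(G))_2^*\bigr)$; clearly $M \succeq 0$ and $\operatorname{rank} M = \ell - 2$, so $M \in \Sigma_{X(G)}^*$. To verify extremality via Lemma~\ref{lem:exray}, suppose $M'$ is in this pattern space, positive semidefinite, and satisfies $\ker M' \supseteq \ker M$. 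Since $\ker M$ contains the standard basis vectors indexed by $V(G) \setminus V(C)$, positive semidefiniteness of $M'$ forces all rows and columns of $M'$ outside $V(C) \times V(C)$ to vanish; the surviving block lies in the pattern space of $X(C)$ (again by the induced-cycle property), is positive semidefinite, and has kernel containing $\ker M^C$. Extremality of $M^C$ then forces $M' = \lambda M$, completing the argument.

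The main technical subtlety is the lifting step, which depends crucially on $C$ being induced in $G$ so that the pattern constraints on $X(C)$ are inherited from those on $X(G)$ without extraneous relations; for a non-induced cycle the non-edges of $C$ inside $V(C)$ need not be non-edges of $G$, breaking the extension. A secondary point to handle carefully is the correct application of the Eisenbud-Green-Hulek-Popescu theorem (with $I(G)$ identified as the edge ideal of $\bar{G}$, not of $G$) and the applicability of the Proposition~\ref{prop:ker} bound to the reducible variety $X(C)$.
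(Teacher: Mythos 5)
Your argument is correct and follows the route the paper sketches: the lower bound on the Hankel index combines Theorem~\ref{thm:hindex} with the Eisenbud--Green--Hulek--Popescu computation that the Green--Lazarsfeld index of $I(G)$ equals $\ell-3$, which is exactly the derivation the text alludes to via~\cite{MR2188445}. Your zero-padding construction from an induced cycle $C$, using Lemma~\ref{lem:exray} to certify extremality, supplies the matching upper bound that the lecture notes leave implicit, and you correctly isolate that $C$ being \emph{induced} is what lets the pattern constraints restrict cleanly. One small point of hygiene: Lemma~\ref{lem:exray} requires maximality of $\ker M$ against \emph{all} matrices in the linear subspace $L$, not only the positive semidefinite ones, so you should not assume $M'\succeq 0$ in the extremality check; fortunately your argument does not actually use it, since the vanishing of the rows and columns of $M'$ indexed outside $V(C)$ already follows from $e_i\in\ker M'$ for $i\notin V(C)$, with no appeal to positive semidefiniteness.
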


\subsection{Quantitative Comparison of $P_G$ and $\Sigma_G$.}
	Let $\bar{P}_G$ and $\bar{\Sigma}_G$ be the intersections of the respective cones $P_G$ and $\Sigma_G$ with the trace $1$ hyperplane. As a quantitative comparison of sizes of $P_G$ and $\Sigma_G$ we can consider the \textit{expansion ratio} $\alpha(G)$ of $\Sigma_G$ and $P_G$ with respect to the scaled identity matrix $\frac{1}{n}I_n$: let $\alpha(G)$ be the smallest real number such that $\bar{P}_G \subseteq I_n+\alpha (\bar{\Sigma}_G-\frac{1}{n}I_n)$. Geometrically $\alpha(G)$ measures how much we need to expand $\bar{\Sigma}_G$ around $\frac{1}{n}I_n$ so that the expansion contains $P_G$.

\begin{thm}\cite{BS2020}\label{thm:bshu}
	For $n$-cycles we have $\alpha(G) =1 + O(\frac{1}{n^2})$ as $n \rightarrow \infty$.
\end{thm}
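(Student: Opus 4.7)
I would prove $\alpha(C_n) = 1 + O(1/n^2)$ by parametrizing both cones in angular coordinates via the classical completability criterion for cycle patterns, and then quantifying the amount of shrinkage toward $\tfrac{1}{n}I_n$ needed to correct the worst-case violation of the cycle closure condition.

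\textbf{Reduction and cycle criterion.} Because $\bar P_{C_n}$ and $\bar\Sigma_{C_n}$ are invariant under positive diagonal rescaling of rows and columns, I first reduce to correlation matrices: elements of $\bar P_{C_n}$ with diagonal $\tfrac{1}{n}I_n$. Such a matrix is encoded by the correlations $\rho_i := nA_{i,i+1}\in[-1,1]$ on the $n$ edges, and the shrinkage $A \mapsto \tfrac{1}{n}I_n + \tfrac{1}{\alpha}(A - \tfrac{1}{n}I_n)$ acts linearly as $\rho_i \mapsto t\rho_i$ with $t=1/\alpha$. Set $\theta_i = \arccos(t\rho_i)\in[0,\pi]$. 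The classical Barrett--Johnson--Loewy completability theorem for cycle patterns gives an iterative description of $\bar\Sigma_{C_n}$: define $I_1 = \{0\}$ and
\[
I_{k+1} \;=\; \Bigl(\,\bigcup_{\phi \in I_k}\,[\,|\phi-\theta_k|,\,\phi+\theta_k\,]\Bigr) \cap [0,\pi],
\]
viewed as the set of possible values $\arccos\langle v_1,v_{k+1}\rangle$ when the $v_i$ are realized as unit vectors in a sufficiently large Hilbert space with the specified consecutive inner products. The correlation matrix lies in $\bar\Sigma_{C_n}$ iff $\theta_n\in I_n$; each $I_k$ is an interval whose endpoints each move by at most $\theta_k$ in each direction per step.

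\textbf{Key estimate.} For $A\in\bar P_{C_n}\setminus\bar\Sigma_{C_n}$ at $t=1$, the angles $\theta_i^\star := \arccos\rho_i$ fail closure $\theta_n^\star\in I_n^\star$ by some gap $\delta\leq\pi$. After shrinking, each angle moves by $|\theta_i(t)-\theta_i^\star| \leq \arccos t \sim \sqrt{2(1-t)}$ as $t\uparrow 1$, with equality attained only at the corner $\rho_i\in\{\pm1\}$ (where the original angle sits at an endpoint of $[0,\pi]$ with no slack). Consequently the interval $I_n(t)$ grows by at most $\sum_{k=1}^{n-1} |\theta_k(t)-\theta_k^\star| = O(n\sqrt{1-t})$, while $\theta_n(t)$ shifts by $O(\sqrt{1-t})$. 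Requiring the cumulative slack to exceed $\pi$ yields $n\sqrt{1-t}\asymp 1$, hence $1-t = O(1/n^2)$, and therefore $\alpha(C_n) = 1/t \leq 1 + O(1/n^2)$.

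\textbf{Main obstacles and tightness.} The principal difficulty is confirming that the extremal direction really is at the corners $\rho_i\in\{\pm 1\}$: for interior $\rho_i$ the shift rate drops from $\sqrt{1-t}$ to $1-t$, so one must rule out exotic interior patterns requiring a larger $\alpha$. A monotonicity argument based on convexity of $\bar\Sigma_{C_n}$ (the family $(t\rho_i)$ enters the cone for all sufficiently small $t$, uniformly) together with a rearrangement toward the boundary should suffice. A secondary subtlety is handling non-uniform diagonals, where shrinkage perturbs both $A_{ii}$ and $A_{i,i+1}$ and the relation $\rho_i\mapsto t\rho_i$ holds only to first order; this contributes only a lower-order correction absorbed in $O(1/n^2)$. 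For matching tightness I would compute $\alpha$ directly on the extremal example $(\rho_1,\dots,\rho_n) = (1,\dots,1,-1)$ with $n$ odd: the iteration gives $I_n(t) = [0,(n-1)\arccos t]$ and $\theta_n(t) = \pi-\arccos t$, so the closure condition reduces to $n\arccos t \geq \pi$, i.e.\ $\arccos t \geq \pi/n$, giving $1-t \sim \pi^2/(2n^2)$ and matching the upper bound with optimal constant $\pi^2/2$.
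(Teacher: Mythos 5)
The theorem is quoted from \cite{BS2020}, so there is no in-text proof to compare against; I will therefore assess your outline on its own terms. The framework you chose (angular coordinates and the iterative cycle-closure criterion) is the natural one, and your extremal computation $(\rho_1,\dots,\rho_n)=(1,\dots,1,-1)$ giving $\alpha\geq \sec(\pi/n)=1+\pi^2/(2n^2)+o(n^{-2})$ is correct and does establish the matching lower order. But as an argument for the upper bound $\alpha\leq 1+O(1/n^2)$ --- which is the actual content of the statement --- the proposal has two genuine gaps.

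First, the reduction to correlation matrices does not follow from the stated invariance. Conjugation $A\mapsto DAD$ by a positive diagonal $D$ preserves PSD-completability of the cycle pattern, but it does not preserve the trace-one slice and does not commute with the contraction $A\mapsto\tfrac1n I_n+\tfrac1\alpha(A-\tfrac1n I_n)$ unless $D$ is scalar, because the contraction is anchored at $\tfrac1n I_n$. For non-uniform diagonals the post-contraction correlation is
\[
\rho_i' \;=\; \frac{t\,a_{i,i+1}}{\sqrt{\bigl(\tfrac{1-t}{n}+t\,a_{ii}\bigr)\bigl(\tfrac{1-t}{n}+t\,a_{i+1,i+1}\bigr)}},
\]
which equals $t\rho_i$ only when $a_{ii}=a_{i+1,i+1}=\tfrac1n$; for $a_{ii}>\tfrac1n$ the shrinkage factor is strictly closer to $1$, so edges incident to large diagonals are contracted \emph{less}. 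One must argue that the compensating extra shrinkage on the small-diagonal edges wins out globally, and that is not a lower-order correction --- it is a real inequality to prove.

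Second, and more centrally, the ``Key estimate'' paragraph argues in the wrong direction. You bound the cumulative angular displacement from \emph{above} by $O(n\sqrt{1-t})$, but the upper bound on $\alpha$ needs a statement of the form: for $1-t\gtrsim c/n^2$ the displacement is \emph{at least} the closure gap $\delta$, uniformly over all $A\in\bar P_{C_n}$. When the $\rho_i$ are bounded away from $\pm1$ the per-edge shift drops from $\Theta(\sqrt{1-t})$ to $\Theta(1-t)$, so the cumulative shift is only $O(n(1-t))$; to make the argument work one must show that such interior configurations also have gap $\delta=O(n(1-t))$, i.e.\ the gap and the shrinkage rate degrade together. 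You flag this as ``the principal difficulty'' and appeal to a convexity/rearrangement argument, but that is the crux of the proof and is left undone. A further technical point: the iterative interval $I_k$ is clipped at $0$ and $\pi$, and once clipping occurs the endpoints of $I_k$ are not simply $\pm\theta_k$-Lipschitz in the earlier $\theta_j$; the clean ``endpoints move by at most $\theta_k$ per step'' bookkeeping needs to account for this, which again bites precisely in the near-extremal regime you are trying to analyze.

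Until these two points are resolved the proposal establishes $\alpha\geq 1+\Theta(1/n^2)$ but not the stated $\alpha\leq 1+O(1/n^2)$.
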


\begin{rmk}
In analogy with Theorem \ref{thm:vol} we can also consider the volume ratio $$\left(\frac{\operatorname{vol} \bar{\Sigma}_G}{\operatorname{vol}\bar{P}_G}\right)^{\frac{1}{D}},$$ with $D=\binom{n+1}{2}-1$. The above result implies that the volume ratio $$\left(\frac{\operatorname{vol} \bar{\Sigma}_{C_n}}{\operatorname{vol}\bar{P}_{C_n}}\right)^{\frac{1}{D}}$$ goes to $1$ as $n$ approaches infinity, confirming an experimental observation of \cite{MR2740626}.
\end{rmk}

\begin{rmk} Using clique sums the result of Theorem \ref{thm:bshu} has been extended to a more general family of graphs than simply cycles. However a small induced cycle will always cause the expansion ratio to be large.

\end{rmk}

It is natural to ask whether a quantitative comparison of sizes of cones $\Sigma_X$ and $P_X$ can be developed for general varieties $X$. A good candidate would be arithmetically Cohen-Macaulay varieties of almost minimal degree. In fact, cycle graphs can be viewed as degenerations of elliptical normal curves, and they are actually the natural graph analogues of such varieties. However, there are currently no results in this direction.

\begin{problem}
Develop methods for quantitative comparisons of the cones $\Sigma_X$ and $P_X$ for general totally real varieties $X$.
\end{problem}
It is tempting to try to leverage graph results to general varieties by using \textit{degeneration techniques}, such as Gr\"{o}bner degeneration. For instance, the graph case of Theorem \ref{thm_reg_2} is Theorem \ref{thm:graphs}, which is significantly easier to prove. We end the lecture notes with the following open-ended question:

\begin{problem}[Open Question]
	Can we use (Gr\"obner) degenerations to prove results on relationship between nonnegative polynomials and sums of squares results?
\end{problem}

\textbf{Ackhowledgements.} We would like to thank Daniele Agostini, Thomas Kr\"{a}mer, Marta Panizzut and Rainer Sinn  for organizing a wonderful Fall School. We are grateful to Rainer Sinn for help in improving these notes. Grigoriy Blekherman was partially supported by NSF grant DMS-1901950.

	\bibliographystyle{alpha}
	\bibliography{ChapterBib}
\end{document}